\numberwithin{equation}{section}
\newtheorem{theorem}{Theorem}[section]
\newtheorem{proposition}[theorem]{Proposition}
\newtheorem{corollary}[theorem]{Corollary}
\newtheorem{conjecture}[theorem]{Conjecture}
\theoremstyle{definition}
\newtheorem{definition}[theorem]{Definition} 
\newtheorem{remark}[theorem]{Remark}
\newtheorem{example}[theorem]{Example}
\DeclareMathOperator{\depth}{depth}
\DeclareMathOperator{\Ind}{Ind}
\DeclareMathOperator{\Facets}{Facets}
\newcommand{\qand}{\quad \mbox{and} \quad}
\newcommand{\qfor}{\quad \mbox{for} \quad}
\newcommand{\F}{\mathcal{F}}
\newcommand{\N}{\mathcal{N}}
\renewcommand{\P}{{\mathcal{P}}}
\newcommand{\st}{\colon}
\newcommand{\lk}{\mathrm{link}}
\newcommand{\del}{\mathrm{del}}
\begin{document}
 
\title{Spheres and balls as independence complexes}

\author[S.~M.~Cooper]{Susan M. Cooper}
\address[S. M. Cooper]
{Department of Mathematics\\
University of Manitoba,
Winnipeg, MB\\
Canada R3T 2M6}
\email{susan.cooper@umanitoba.ca}

\author[S. Faridi]{Sara Faridi}
\address[S. Faridi]
{Department of Mathematics \& Statistics,
Dalhousie University,
6297 Castine Way,
PO BOX 15000,
Halifax, NS,
Canada B3H 4R2
}
\email{faridi@dal.ca}

\author[T. Holleben]{Thiago Holleben}
\address[T. Holleben]
{Department of Mathematics \& Statistics,
Dalhousie University,
6297 Castine Way,
PO BOX 15000,
Halifax, NS,
Canada B3H 4R2}
\email{hollebenthiago@dal.ca}

\author[L. Nicklasson]{Lisa Nicklasson}
\address[L. Nicklasson]
{Division of Mathematics and Physics,
Mälardalen University,
Universitetsplan 1,
722 18 Västerås, Sweden}
\email{lisa.nicklasson@mdu.se}

\author[A. Van Tuyl]{Adam Van Tuyl}
\address[A. Van Tuyl]
{Department of Mathematics and Statistics
McMaster University, Hamilton, ON, L8S 4L8}
\email{vantuyla@mcmaster.ca}

\keywords{simplicial complexes, simplicial spheres, Bier spheres, pseudomanifold, Cohen-Macaulay, very well-covered graphs, grafting,  monomial ideals, Artinian rings, polarization}
\subjclass[2020]{13F55, 05E45, 05C69}

 
\begin{abstract}
The terms \say{whiskering}, and more generally \say{grafting}, refer to adding generators to any monomial 
ideal to make the resulting ideal  Cohen-Macaulay.
We investigate the independence complexes
of simplicial complexes that are constructed through
a whiskering or grafting process, 
and we show that these independence complexes are (generalized) Bier balls. More specifically, the independence complexes are either homeomorphic
to a ball or sphere.  In a related direction, we 
classify when
the independence complexes of very well-covered graphs are homeomorphic to balls or spheres.
\end{abstract}
\maketitle

\section{Introduction}

    In 1990, Villarreal~\cite[Proposition 2.2]{V1990} introduced an operation that takes any graph $G$ and outputs a graph $w(G)$ with a Cohen-Macaulay edge ideal. Over the years, this construction was extensively studied in commutative algebra and became known as \say{whiskering}. In unpublished work from 1992,
    and brought to a wider attention by \cites{Ma2003, BPS2005, dL2004}, Bier ~\cite{B1992} 
    introduced a simple operation that inputs any simplicial complex $\Delta$ on $n$ vertices, and outputs a simplicial complex $B(\Delta)$ homeomorphic to a $(n-1)$--dimensional sphere. The complex $B(\Delta)$ is obtained by taking the boundary of a simplicial ball $\Delta'$ and, when $\Delta'$ is flag, its Stanley-Reisner ideal is exactly the one introduced by Villarreal in~\cite{V1990}. Bier's construction is a strikingly simple way of generating many (combinatorially) different spheres~\cite[Subsection 6.1]{BPS2005}. As a consequence of this 
    observation, the realizability of these spheres (and their generalizations) as the boundary of a (simplicial) polytope became an interesting problem that has been studied from different perspectives, see for example~\cites{JTZ2021,BN2012}.

    From an algebraic perspective, Villarreal's construction is intrinsically related to polarizations of monomial ideals. In view of this, Faridi in 2005~\cite{Fa2005} introduced the notion of \say{grafted complexes}, which are essentially the simplicial complexes whose facet ideals are polarizations of artinian monomial ideals. Similar to whiskering, Faridi's construction takes any (square-free) monomial ideal and outputs a Cohen-Macaulay ideal generated by square-free monomials. In 2011, Murai~\cite{M2011} generalized Bier's construction by considering simplicial complexes arising from multicomplexes. Our first contribution in this paper is to show that Murai's construction coincides with Faridi's construction, in the same sense that Bier's construction coincides with Villarreal's. The statement below summarizes \cref{thm.mainthm-new}.
    
    \begin{theorem}[{\bf Generalized Bier balls and grafted complexes}]\label{t:main1}
        A simplicial complex is grafted if and only if 
        its independence
        complex is a generalized Bier ball $B(M)$, where $M$ is a multicomplex.
    \end{theorem}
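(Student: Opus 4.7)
The plan is to route both directions through polarization. First I would set up the algebraic dictionary: an artinian monomial ideal $I^a \subset k[x_1,\ldots,x_n]$ corresponds bijectively to the multicomplex $M$ of monomials outside $I^a$, and its polarization $\mathrm{Pol}(I^a)$ is a squarefree monomial ideal in a polynomial ring with variables $x_{i,j}$ (for $1 \le i \le n$ and $1 \le j \le a_i$, where $a_i$ encodes the maximum exponent of $x_i$ appearing in the minimal generators of $I^a$). By Faridi's definition, $\Delta$ is grafted exactly when $I(\Delta) = \mathrm{Pol}(I^a)$ for some artinian monomial ideal $I^a$. Since the independence complex of $\Delta$ is by definition the Stanley-Reisner complex of the facet ideal $I(\Delta)$, the theorem reduces to proving the identity
\[
\Ind(\Delta) = B(M),
\]
where $M$ is the multicomplex associated to $I^a$.

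Second, I would unpack Murai's generalized Bier construction and rewrite it in algebraic language. Given a multicomplex $M$ with degree bound $(a_1,\ldots,a_n)$, I would describe $B(M)$ as a simplicial complex on the polarized vertex set $\{x_{i,j}\}$ whose faces (or, dually, non-faces) are given by a specific family of subsets indexed by the elements of $M$ and its complement. The goal of this step is to turn Murai's set-theoretic definition into the statement \emph{``$\sigma \subset \{x_{i,j}\}$ is a non-face of $B(M)$ iff $\prod_{x_{i,j}\in\sigma} x_{i,j} \in \mathrm{Pol}(I^a)$''}.

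The core step is then a direct verification of that last statement. Each minimal generator $x_1^{b_1}\cdots x_n^{b_n}$ of $I^a$ polarizes to $\prod_{i}\prod_{j\le b_i} x_{i,j}$, and one checks that Murai's forbidden patterns are built from exactly these data, one pattern per minimal generator. Both directions of the biconditional then fall out formally: if $\Delta$ is grafted, $I(\Delta) = \mathrm{Pol}(I^a)$ for some artinian $I^a$, and the identity above gives $\Ind(\Delta) = B(M)$; conversely, if $\Ind(\Delta) = B(M)$, the identity forces $I(\Delta) = \mathrm{Pol}(I^a)$ for the artinian ideal $I^a$ corresponding to $M$, so $\Delta$ is grafted.

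The main obstacle I anticipate is the bookkeeping required to match vertex labels and face descriptions across the two constructions. Bier-type complexes classically involve a \emph{pair} $(\sigma,\tau)$ with $\tau$ a ``dual'' face, and in the generalized multicomplex setting this pairing interacts nontrivially with the multiplicities $(a_1,\ldots,a_n)$ that partition the polarized variables into blocks. Translating Murai's pairing faithfully into the Stanley-Reisner picture of $\mathrm{Pol}(I^a)$---without introducing a spurious shift or mislabeling between a variable $x_{i,j}$ and its ``complementary'' counterpart---will be the technical heart of the argument. Once that translation is pinned down, the proof reduces to a single combinatorial identity that can be checked on minimal generators.
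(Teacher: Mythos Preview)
Your overall route through polarization is exactly the paper's route, but you have the two halves inverted in terms of where the real content lies.

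The equivalence ``$\Delta$ is grafted $\iff \mathcal{F}(\Delta)=\mathrm{Pol}(I^a)$ for some artinian monomial ideal $I^a$'' is \emph{not} Faridi's definition. The definition of a grafted complex in \cite{Fa2005} (reproduced as \cref{d:grafting} here) is the purely combinatorial one involving leaves, joints, and the recursive condition on removing joints. The characterization via polarization is the content of \cite[Theorem~8.2]{Fa2005}, and its proof is not a tautology: one has to show that the leaves of a grafted complex partition the vertex set into blocks that can serve as the polarized variables $\{x_{i,1},\dots,x_{i,a_i}\}$, and conversely that any polarization of an artinian ideal has the leaf/joint structure required. The paper simply cites this theorem; you are assuming it without flagging that it is the substantive step.

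Conversely, the part you identify as the ``technical heart''---unpacking Murai's $B(M)$ and matching its nonfaces to the generators of $\mathrm{Pol}(I^a)$---is, in the framework of this paper, essentially definitional: the paper \emph{defines} a generalized Bier ball to be the Stanley--Reisner complex of the polarization of an artinian monomial ideal (and cites \cite[Lemma~3.2]{M2011} for the agreement with Murai's original description). So the bookkeeping you are worried about is already packaged away. If you replace ``By Faridi's definition'' with ``By \cite[Theorem~8.2]{Fa2005}'' and drop the plan to re-derive Murai's identification, your outline collapses to precisely the paper's two-citation proof.
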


     As a consequence of~\cref{t:main1}, Faridi's results from~\cite{Fa2005} can be seen as a characterization of the minimal nonfaces of generalized Bier balls from~\cite{M2011}.

     The ideal  connecting grafted complexes and generalized Bier balls  in \cref{t:main1} is the polarization of an artinian monomial ideal.
    We note that more general notions of polarizations were considered in~\cite[Conjecture 3.2]{AFL2022}, where the authors conjecture that any notion of polarization of an artinian monomial ideal leads to a simplicial ball. A proof of this conjecture was recently announced in~\cite{FGM}.

    In commutative algebra, Faridi's construction is only one of many generalizations of whiskering. Our second contribution shows that the notion of coloured-whiskering from Biermann and Van Tuyl~\cite{BVT2013} almost never outputs a pseudomanifold.
    In particular, these complexes are a very large class of interesting vertex decomposable but non-pseudomanifold simplicial complexes.
    
    \begin{theorem}
        Let $\Delta$ be a simplicial complex and $\Delta_\chi$ be the complex obtained via coloured-whiskering with respect to a partition $\chi$ of the vertices. Then the following are equivalent:
    
    \begin{enumerate}
            \item $\Delta_\chi$ is a pseudomanifold;
            \item $\Delta_\chi$ is a grafted complex.
        \end{enumerate}
    \end{theorem}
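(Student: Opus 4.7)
The implication $(2)\Rightarrow(1)$ is essentially a structural fact about grafted complexes: by Faridi's original analysis in \cite{Fa2005}, every grafted complex is a shellable simplicial ball, and in particular a pseudomanifold (with boundary). This part of the argument does not make any use of the coloured-whiskering hypothesis.

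For the converse $(1)\Rightarrow(2)$, I would begin by writing out the facets of $\Delta_\chi$ explicitly. With $\chi=\{V_1,\ldots,V_k\}$ and whisker vertices $w_1,\ldots,w_k$, each facet of $\Delta_\chi$ is obtained from a facet of $\Delta$ by adjoining exactly the whisker vertices whose color classes it misses. From this description, the intersection pattern of the facets of $\Delta_\chi$, and hence the multiplicity of every codimension-one face, can be read off directly.

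The key observation driving the forward direction is the following: if some color class $V_i$ contains two distinct vertices $u,v$ that can both replace $w_i$ in a facet $F$ of $\Delta_\chi$, then three distinct facets contain the codimension-one face $F\setminus\{w_i\}$, namely $F$ itself together with the extensions by $u$ and by $v$. Since $\Delta_\chi$ is a pseudomanifold, this rules out a large family of configurations of $(\Delta,\chi)$. A parallel analysis of codimension-one faces of the form $F\setminus\{v\}$ for $v$ an original vertex of $\Delta$ produces additional restrictions. The goal is then to show that the pairs $(\Delta,\chi)$ surviving all of these restrictions are precisely those for which the facet ideal of $\Delta_\chi$ is a polarization of an artinian monomial ideal; equivalently, by \cite{Fa2005}, those for which $\Delta_\chi$ is grafted. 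As a sanity check, one may use \cref{t:main1} to verify that $\Ind(\Delta_\chi)$ is a generalized Bier ball in each surviving case.

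The main obstacle will be the case analysis in the previous paragraph. One has to organize carefully how color classes meet the facets of $\Delta$, accounting for the possibility that $\Delta$ is not pure and that several color classes may be active on overlapping parts of $\Delta$. Systematically ruling out every non-grafted configuration requires a nontrivial combinatorial enumeration, and I expect this to be where the real work lies.
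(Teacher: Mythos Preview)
Your sketch has two genuine gaps, one in each direction.

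For $(2)\Rightarrow(1)$: the claim that ``every grafted complex is a shellable simplicial ball'' is false. A grafted complex is typically not even pure, let alone a ball; what \cite{Fa2005} proves is that its \emph{facet ideal} is Cohen--Macaulay, equivalently that its \emph{independence complex} is Cohen--Macaulay. The statement in the introduction is shorthand for the precise version in \cref{t.pseudomanifolds+boundary}: $\Delta_\chi$ is the \emph{independence complex} of a grafted complex. With that reading, the implication follows from \cref{thm.mainthm-new} (the independence complex is a generalized Bier ball, hence a sphere or ball, hence a pseudomanifold). Your argument as written does not establish this.

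For $(1)\Rightarrow(2)$: you have the right opening move but then badly overestimate the remaining work. There is no ``nontrivial combinatorial enumeration'' and no need to analyze faces of the form $F\setminus\{v\}$ for $v$ an original vertex. The paper uses a single explicit witness: the facet $\{y_1,\ldots,y_s\}$ corresponding to the empty face of $\Delta$. If some colour class $V_1$ contains distinct vertices $u,v$, then $\{y_2,\ldots,y_s\}$ lies in the three facets $\{y_1,y_2,\ldots,y_s\}$, $\{u,y_2,\ldots,y_s\}$, and $\{v,y_2,\ldots,y_s\}$ (these are facets because $\Delta_\chi$ is pure of dimension $s-1$ and each is a face of the form $F\cup Y_F$). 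So $\Delta_\chi$ is not a pseudomanifold. This one observation forces every colour class to be a singleton, i.e.\ $\chi$ is the $n$-colouring; \cref{lem:coloredwhisker} then identifies $\Delta_\chi$ as the independence complex of a whiskered (hence grafted) complex. No further case analysis is required.
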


    From a completely different perspective, in 1982 Favaron~\cite{F1982} introduced the class of very well-covered graphs. This class of graphs includes whiskered graphs, which are sometimes called \say{leafy extensions}~\cite{BC2018}  in this context.
     We are able to make the following conclusions about the independence complex of $G$ when $G$ is a very well-covered graph. Combining with Stanley's result \cite[Theorem II.5.1]{Stanley1996} this classifies Gorenstein very well-covered graphs.

    \begin{theorem}\label{t:main3}
        Let $G$ be a very well-covered graph with edge ideal $I$.  If $G$ is not a disjoint
        set of edges, then $I$ is Cohen-Macaulay if and only if the Stanley-Reisner complex of
        $I$, also known as the
        independence complex of $G$, is homeomorphic to a ball. 
        If $G$ is a disjoint set of edges, then $I$ is Gorenstein and the independence complex
        of $G$ is homeomorphic to a sphere.
        In particular, the edge ideal of a very well-covered graph $G$ is Gorenstein if and only if $G$ is a disjoint set of edges.
    \end{theorem}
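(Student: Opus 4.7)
The strategy is to use \Cref{t:main1} as the central tool: to identify a Cohen-Macaulay very well-covered graph (viewed as a $1$-dimensional simplicial complex) as grafted, and thereby recognize its independence complex as a generalized Bier ball $B(M)$, which is homeomorphic either to a ball or to a sphere. The Gorenstein portion will then follow from Stanley's theorem.

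The first step is to invoke the structure theorem for very well-covered graphs: any such graph on $2n$ vertices admits a canonical perfect matching $\{x_1 y_1, \ldots, x_n y_n\}$ with the $y_i$'s acting as pendant-like simplicial vertices. Combining this with the known combinatorial characterization of Cohen-Macaulay very well-covered graphs, the key claim is that when $I(G)$ is Cohen-Macaulay, $G$ viewed as a $1$-dimensional complex has facet ideal equal to the polarization of some artinian monomial ideal $\mathfrak{a} \subseteq k[z_1, \ldots, z_n]$; that is, $G$ is grafted in the sense of \cite{Fa2005}. Once this is established, \Cref{t:main1} identifies $\Ind(G)$ with the generalized Bier ball $B(M)$ of the multicomplex $M$ corresponding to $\mathfrak{a}$.

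The next step is to separate the ball case from the sphere case. If $G$ is a disjoint union of $n$ edges, a direct join computation gives
\[
\Ind(G) = \{x_1, y_1\} \ast \{x_2, y_2\} \ast \cdots \ast \{x_n, y_n\} \cong S^{n-1},
\]
so the independence complex is a sphere. If $G$ is not a disjoint set of edges, then $M$ must contain generators that are not pure squares of variables, from which I would argue that $B(M)$ has non-empty boundary and hence $\Ind(G)$ is a genuine ball rather than a sphere. The converse of the Cohen-Macaulay direction is automatic: both balls and spheres are Cohen-Macaulay, so either topological outcome forces $I(G)$ to be Cohen-Macaulay.

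Finally, Stanley's theorem \cite[Theorem II.5.1]{Stanley1996} yields that a Cohen-Macaulay Stanley-Reisner complex is Gorenstein if and only if it is homeomorphic to a sphere. Combining this with the ball-versus-sphere dichotomy above gives that $I(G)$ is Gorenstein precisely when $G$ is a disjoint set of edges. The main obstacle is the translation step in the second paragraph, namely verifying that a Cohen-Macaulay very well-covered graph is always grafted in Faridi's sense; this requires matching up the Crupi--Rinaldo--Terai combinatorial characterization with Faridi's polarization-based definition, and is where the structural combinatorics does the real work. Once that identification is in hand, the rest of the theorem follows directly from \Cref{t:main1} and standard simplicial topology.
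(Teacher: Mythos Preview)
Your central claim---that every Cohen--Macaulay very well-covered graph is grafted, i.e., that its edge ideal is the (standard) polarization of an artinian monomial ideal---is false, so \cref{t:main1} cannot carry the argument. The paper itself displays a counterexample at the start of \cref{s:verywellcovered}: a Cohen--Macaulay very well-covered graph on $\{x_1,\ldots,x_6,y_1,\ldots,y_6\}$ containing edges such as $\{x_3,y_4\}$ and $\{x_4,y_5\}$. In any polarization of an artinian ideal generated in degree two (so containing each $z_i^2$), the ``second'' variable $x_{i,2}$ appears in exactly one generator, namely $x_{i,1}x_{i,2}$; hence the polarized graph must have at least $d$ vertices of degree one. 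The example has only three such vertices ($y_1,y_2,y_3$), so its edge ideal is not a polarization. Equivalently, in the language of \cref{d:grafting}, the edge $\{x_4,y_4\}$ is not a leaf because both endpoints have degree $\geq 2$, so the leaves cannot cover the vertex set and condition~(1) fails. Thus the translation step you flag as ``the main obstacle'' is not merely technical: it is simply not true.

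The paper avoids grafting entirely for this class. From the Crupi--Rinaldo--Terai labelling (\cref{thm.verywellcovered-label}) it extracts the elementary fact that every facet of $\Ind(G)$ is a transversal $\{z_1,\ldots,z_d\}$ with $z_i\in\{x_i,y_i\}$ (\cref{cor.verywellcovered-facets}). This immediately forces each $(d-2)$-face to lie in at most two facets, so $\Ind(G)$ is a pseudomanifold; a short case analysis shows the boundary is nonempty exactly when $G\neq dK_2$ (\cref{thm.verywellcovered-pseudomanifold}). Since Cohen--Macaulayness is already known to be equivalent to shellability for very well-covered graphs (\cref{thm.verywellcoveredclass}), Bj\"orner's \cref{t:sphere-ball} then gives the ball/sphere dichotomy. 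The Gorenstein conclusion follows via Stanley's criterion as you indicated, after checking that $\Ind(G)$ is not a cone.
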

  We note that a great deal of the research on very well-covered graphs in graph theory is focused on the unimodality of their independence polynomials~\cites{LM2006, BC2018}. For the sake of completeness, we state Levit and Mandrescu's conjecture in the context of flag simplicial complexes.

  \begin{conjecture}[\cite{LM2006}]\label{c:1}
      If $\Delta$ is a $(d-1)$--dimensional pure flag complex on $2d$ vertices, its $f$-vector is unimodal. 
  \end{conjecture}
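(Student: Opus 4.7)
The plan is to translate \cref{c:1} into a statement about very well-covered graphs. By Favaron's theorem, a pure flag complex of dimension $d-1$ on $2d$ vertices is precisely the independence complex $\Ind(G)$ of a very well-covered graph $G$, equipped with a perfect matching $\{x_1y_1,\ldots,x_dy_d\}$ and the accompanying combinatorial constraints on the neighborhoods of matched pairs. The $f$-vector of $\Delta = \Ind(G)$ is then the sequence of coefficients of the independence polynomial of $G$, and unimodality becomes a purely combinatorial property of independent sets in $G$.

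The most direct attack is to exhibit, for each $i$ strictly below the conjectural mode, an injection $F_i(\Delta) \hookrightarrow F_{i+1}(\Delta)$ that assigns to each independent set $S$ a strictly larger one $\varphi(S) \supsetneq S$ from which $S$ can be canonically recovered. The perfect matching is the natural source of such extensions: among the matched pairs $\{x_j,y_j\}$ disjoint from $N[S]$, Favaron's structural conditions on how the $x_j$'s and $y_j$'s interact with one another suggest a canonical choice, for instance adjoining the distinguished endpoint of the lexicographically first such disjoint pair under Favaron's orientation of the matching. One would then need to prove injectivity by showing that the added vertex, and hence $S$, can be reconstructed from $\varphi(S)$ using the matching structure.

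The main obstacle is essentially what has kept this conjecture open since \cite{LM2006}: no such canonical extension has been found in general, and ad hoc attempts fail because two different independent sets can be extended to the same set. In the Cohen-Macaulay case, unimodality of the $f$-vector would follow from unimodality of the $h$-vector, but the latter is the flag $h$-vector conjecture and is itself open. Moreover, \cref{t:main3} shows that in the non-Cohen-Macaulay case $\Ind(G)$ need not even be a ball or a sphere, so topological tools such as shellability or $\gamma$-positivity of flag spheres are not directly available. A realistic intermediate target is to resolve \cref{c:1} in the Cohen-Macaulay case by using \cref{t:main1} to exhibit $\Ind(G)$ as a grafted complex, and hence as a generalized Bier ball, and to exploit the explicit facet description of such balls to construct the required injection; one could then attempt to reduce the general case to the Cohen-Macaulay one via a pruning argument on the matching that preserves the unimodality type of the $f$-vector.
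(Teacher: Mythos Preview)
This statement is a \emph{conjecture}, not a theorem; the paper does not prove it. The paper simply records it as the Levit--Mandrescu conjecture and remarks that the Cohen--Macaulay special case is of interest in light of \cref{t:main3}. There is no ``paper's own proof'' to compare against.

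Your proposal is accordingly not a proof either, and you say as much: you outline an injection strategy, then explain why it has not been made to work, and close with an ``intermediate target'' rather than an argument. That is a research plan, not a proof. Treated as a plan, two points deserve correction. First, the identification of $(d-1)$-dimensional pure flag complexes on $2d$ vertices with independence complexes of very well-covered graphs is just the definition (a flag complex is $\Ind(G)$ for the graph $G$ of its minimal nonfaces, and purity of dimension $d-1$ on $2d$ vertices says every maximal independent set of $G$ has size $d=|V|/2$); Favaron's theorem gives further structure but is not needed for the translation. Second, and more seriously, your suggestion to attack the Cohen--Macaulay case by invoking \cref{t:main1} to realize $\Ind(G)$ as a grafted complex does not work: the paper exhibits a Cohen--Macaulay very well-covered graph that is \emph{not} whiskered (the graph displayed after \cref{ex.whisker-verywellcovered}), so \cref{t:main1} does not cover all Cohen--Macaulay very well-covered graphs. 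The relevant structural input for that case is \cref{thm.verywellcovered-label} and \cref{cor.verywellcoveredclass}, which give a ball/sphere, but unimodality of the $f$-vector of a flag simplicial ball is not known in general, so this does not close the gap either.
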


\cref{t:main3} suggests the study of~\cref{c:1} in the special case where $\Delta$ is Cohen-Macaulay (for example when $\Delta$ is a flag Bier ball) is of special interest.

\section{Background}

We begin with a review of necessary background 
on simplicial complexes and 
commutative algebra.

\subsection{Simplicial Complexes}
A (abstract) {\bf simplicial complex} $\Delta$ on a {\bf vertex set} 
$V = V(\Delta)$ is a set of subsets of $V$ with the condition 
that if $F \in \Delta$ and $G \subseteq F$, then $G \in \Delta$.  
An element of $\Delta$ is a called a {\bf face} of $\Delta$, and 
 a {\bf facet} of $\Delta$ is a maximal face under inclusion. The {\bf dimension} of a face $F$ is $\dim(F) = |F|-1$,
the {\bf dimension} of $\Delta$ is the maximum of the dimensions of its faces,
and $\Delta$ is {\bf pure} if all its facets have the same dimension.
A  {\bf simplex} is a simplicial complex with exactly one  facet.
We often denote $\Delta$ as  $\langle F_1, \ldots, F_q \rangle$ meaning  $F_1, \ldots, F_q$ are the facets of $\Delta$ (in particular, a simplicial complex is  uniquely determined by its facets).
The {\bf link} and {\bf deletion} of a face $F$ of $\Delta$ are, respectively,   the subcomplexes  
$$\lk_\Delta(F) = \{G \in \Delta \st G \cup F \in \Delta,\  G \cap F  = \emptyset\} 
\qand  
\del_\Delta(F) = \{ G \in \Delta \st  G \cap F = \emptyset\}.$$
A subset $W \subseteq V$ is an {\bf independent set} of 
$\Delta$ if it contains no  facet of $\Delta$, and 
$\Ind(\Delta)$, called the {\bf independence complex} of $\Delta$, 
is the simplicial complex whose faces are the independent sets of $\Delta$; in other words,  
$$ 
\Ind(\Delta)= \{W \subseteq V \st W  \text{ is an independent set of }\Delta\}.
$$

For every simplicial complex $\Delta$, we may associate a topological space $|\Delta|$ contained in $\mathbb{R}^k$ for some $k$ such that for every face $\sigma \in \Delta$, $|\Delta|$ contains the convex hull of the corresponding vertices of $\sigma$ (see~\cite[Chapter 1]{Ma2003} for more details on this construction).
We call $|\Delta|$ the {\bf geometric realization} of $\Delta$.
When the geometric realization of a $d$-dimensional
simplicial complex $\Delta$ is homeomorphic to a sphere $S^d$, we say $\Delta$ is a {\bf simplicial sphere}, and when it  is homeomorphic to a ball $B^d$, we say $\Delta$ is a {\bf simplicial ball}. 

In this paper we are interested in 
identifying simplicial spheres and 
simplicial balls. One way to determine this property is to check
if the complex is a shellable  (or even more strongly, vertex decomposable) pseudomanifold.

Our main context is the \say{Cohen-Macaulay} property in algebra (\cref{d:CM}), which forces simplicial complexes to be pure. Therefore, we will  focus on pure simplicial complexes for the next definitions.

\begin{definition}[{\bf Pseudomanifolds, shellable, vertex decomposable}]\label{def.stronglyconnect}
Let $\Delta$ be a pure $d$-dimensional simplicial complex.

\begin{itemize}
\item $\Delta$ is {\bf shellable}  if its facets can be
ordered as $F_1,\ldots,F_q$ such that  for all $i=2,\ldots,q$ 
$$\langle F_i \rangle \cap \langle F_1,\ldots,F_{i-1} \rangle$$ 
is  pure and of dimension $d-1$.

\item $\Delta$ is {\bf vertex decomposable}
if 
\begin{enumerate}
    \item $\Delta = \emptyset$, or
    $\Delta$ is a simplex, or 
    \item there exists a vertex $v \in V$
    such that  every facet of 
    $\del_\Delta(\{v\})$ is a facet of
    $\Delta$, and both
     $\lk_\Delta(\{v\})$ and
    $\del_\Delta(\{v\})$ are vertex 
    decomposable.
\end{enumerate}

\item  A face of dimension $(d-1)$ in $\Delta$ that belongs to at most two facets is called a {\bf ridge}.  
\item $\Delta$ is said to be {\bf strongly-connected} if for every two facets $F,F' \in \Delta$,
    there exists a sequence of facets 
    $$ F=G_0,G_1,\ldots,G_t=F' 
    \quad \mbox{such that} \quad  
    \dim (G_i \cap G_{i+1}) = d-1$$ 
    for $i = 0,\ldots,t-1$.

\item $\Delta$ is a 
{\bf pseudomanifold} if
\begin{enumerate}  
    \item $\Delta$ is pure;
    \item every face of dimension $(d-1)$ is a ridge; and 
    \item $\Delta$ is strongly-connected.
\end{enumerate}

\item a pseudomanifold $\Delta$ is said to have a {\bf boundary} if
there exists at least one face of dimension $(d-1)$
of $\Delta$ that belongs to exactly one facet of
$\Delta$.   
    
\end{itemize}
\end{definition}

The series of implications in \eqref{implications} shows how these classes of simplicial complexes relate to one another. In the sections that follow, we will rely  on the following statement to detect if a simplicial complex is a simplicial sphere or ball.

\begin{theorem}[{\cite[Theorem 11.4]{Bj1995}}]\label{t:sphere-ball}
Let $\Delta$ be a shellable pseudomanifold. Then
\begin{itemize}
 \item if $\Delta$ has no boundary, then  it is a simplicial sphere; 
 \item if $\Delta$ has a boundary, then  it is a simplicial ball.
\end{itemize}
\end{theorem}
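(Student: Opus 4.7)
The plan is to induct on the length of the shelling. Let $F_1,\ldots,F_q$ be a shelling of $\Delta$ and define $\Delta_i := \langle F_1,\ldots,F_i\rangle$. I would show that $|\Delta_i|$ is homeomorphic to a $d$-ball $B^d$ for every $i<q$, while $|\Delta_q|=|\Delta|$ is either a $d$-ball or a $d$-sphere. The base case $i=1$ is immediate because $\Delta_1$ is a single $d$-simplex, whose realization is a $d$-ball.

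For the inductive step, set $R_i := \langle F_i\rangle \cap \Delta_{i-1}$. Shellability guarantees $R_i$ is pure of dimension $d-1$, and the pseudomanifold condition forces each facet of $R_i$ to be a $(d-1)$-face of $F_i$ shared with exactly one earlier $F_j$, $j<i$. In particular, $R_i$ is a union of facets of $\partial F_i$, and two cases arise: either (A) $R_i$ is a proper subcomplex of $\partial F_i$, or (B) $R_i = \partial F_i$.

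In case (A), some vertex $v$ of $F_i$ lies in every facet of $R_i$, so $R_i$ is a cone with apex $v$; a short induction on $d$ then shows $R_i$ is a $(d-1)$-ball. Using the inductive hypothesis that $|\Delta_{i-1}|\cong B^d$ together with the observation that each facet of $R_i$ belongs to exactly one facet of $\Delta_{i-1}$, one argues that $R_i$ lives inside the topological boundary $\partial|\Delta_{i-1}|\cong S^{d-1}$. The classical fact that gluing two $d$-balls along a $(d-1)$-ball inside their respective boundaries again yields a $d$-ball then gives $|\Delta_i|\cong B^d$. In case (B), $R_i \cong S^{d-1}$ sits inside $\partial|\Delta_{i-1}|\cong S^{d-1}$; since a $(d-1)$-sphere cannot be a proper subcomplex of another $(d-1)$-sphere, we must have $R_i = \partial|\Delta_{i-1}|$, and attaching the $d$-ball $F_i$ along the full boundary sphere produces $|\Delta_i|\cong S^d$. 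Moreover, case (B) forces $i=q$: once $|\Delta_i|$ is a sphere, every $(d-1)$-face of $\Delta_i$ already lies in two of its facets, so no further $F_{i+1}$ can share a $(d-1)$-face with $\Delta_i$ without violating the pseudomanifold condition of $\Delta$.

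Finally, I would tie the dichotomy back to the statement: $\Delta$ has no simplicial boundary, meaning every $(d-1)$-face sits in exactly two facets, precisely when case (B) occurs at the final step, and then $|\Delta|\cong S^d$; otherwise every step is case (A) and $|\Delta|\cong B^d$. The main obstacle is the careful bookkeeping between the combinatorial boundary (faces in fewer than two facets) and the topological boundary of $|\Delta_i|$, together with verifying that the inductive homeomorphism to $B^d$ can be chosen compatibly with the attaching map of $F_i$; the pseudomanifold hypothesis is exactly what makes this bookkeeping tractable.
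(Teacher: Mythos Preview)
The paper does not prove this theorem; it is quoted from Bj\"orner's survey \cite[Theorem 11.4]{Bj1995} and used as a black box. There is therefore no ``paper's own proof'' to compare against. Your sketch is the standard argument (and is essentially the one in Bj\"orner's reference): induct on the shelling, show each partial complex $\Delta_i$ is a PL $d$-ball, and observe that the only way the process can terminate in a sphere is if the final attachment is along the entire boundary $\partial F_q$.

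Your outline is correct. A couple of small comments on the write-up. In case~(A), the cone argument is fine, but you might phrase the induction more cleanly by noting that any nonempty proper subset of the facets of $\partial F_i$ is itself a shellable $(d-1)$-pseudomanifold with boundary, so the theorem in dimension $d-1$ gives directly that $R_i$ is a $(d-1)$-ball; this avoids tracking the cone structure separately. In case~(B), the assertion that a $(d-1)$-sphere cannot sit as a proper subcomplex of the boundary sphere $\partial|\Delta_{i-1}|$ is most cleanly justified by invariance of domain (the inclusion is an open map between compact $(d-1)$-manifolds without boundary, hence surjective), or combinatorially by strong connectivity of $\partial\Delta_{i-1}$ together with the ridge condition. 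You correctly flag the one genuine subtlety: identifying the combinatorial boundary of $\Delta_{i-1}$ with the topological boundary of $|\Delta_{i-1}|$ and ensuring the gluing lemma (two PL $d$-balls glued along a PL $(d-1)$-ball in their boundaries is a PL $d$-ball) applies. Both facts are standard in PL topology and are exactly what Bj\"orner invokes; citing them is appropriate at this level of detail.
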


\subsection{Square-free monomial ideals}
There are two one-to-one correspondences between simplicial complexes and square-free 
monomial ideals. More precisely, to a simplicial complex $\Delta$ with vertex set $\{x_1,\ldots,x_n\}$ we associate two unique 
square-free monomial ideals: $\F(\Delta)$, called the {\bf facet ideal} of $\Delta$, and $\N(\Delta)$,  called the {\bf Stanley-Reisner} ideal of $\Delta$, in the polynomial ring  $k[x_1,\ldots,x_n]$ over a field $k$, defined as
$$
\F(\Delta)=\big (x_{i_1}\cdots x_{i_r} \st \{x_{i_1},\ldots,x_{i_r}\} \in \Facets(\Delta) \big ) 
 \qand 
 \N(\Delta)=\big (x_{i_1}\cdots x_{i_r} \st \{x_{i_1},\ldots,x_{i_r}\} \notin \Delta \big ).$$
We use $x_1, \ldots, x_n$ to denote both vertices and variables in the polynomial ring. 
The uniqueness of the correspondences allow us to define $\F(I)$ and $\N(I)$ for a square-free monomial ideal $I$ by 
$$I=\F(\Delta) \iff  \Delta=\F(I) \qand I=\N(\Delta) \iff \Delta=\N(I).$$

It is straightforward to check from the definitions that if $I$ is a square-free monomial ideal, then
\begin{equation}\label{e:connectfacetSR}
\N(I)=\Ind(\F(I)).
\end{equation}

\begin{example}\label{e:facet-SR}
If $I=(abc,cd,ad) \subset k[a,b,c,d]$, then $\Delta=\F(I)$ and 
$\Gamma=\N(I)$ are pictured below. Note that the facets of $\Gamma$ are exactly the maximal independent sets of $\Delta$, that is, $\Gamma=\Ind(\Delta)$.

\begin{center}
\begin{tabular}{cc}
\begin{tikzpicture}[scale=0.75]
 \coordinate (A) at (2,0);
 \coordinate (B) at (0,0);
 \coordinate (C) at (1,2);
 \coordinate (D) at (3,2);
  \draw [fill=gray!15] (A.center) -- (B.center) -- (C.center)--(A.center);
  \draw (A) -- (D);
  \draw (C) -- (D);
\foreach \point in {A, B, C, D} \fill[fill=white,draw=black] (\point) circle (.1);
\node[right] at (A) {$a$};
\node[left] at (B) {$b$};
\node[above] at (C) {$c$};
\node[right] at (D) {$d$};
\end{tikzpicture}
 $\qquad$ & $\qquad$ 
\begin{tikzpicture}[scale=0.75]
 \coordinate (A) at (2,0);
 \coordinate (B) at (0,0);
 \coordinate (C) at (1,2);
 \coordinate (D) at (3,2);
  \draw (A.center) -- (B.center) -- (C.center)--(A.center);
  \draw (B) -- (D);
\foreach \point in {A, B, C, D} \fill[fill=white,draw=black] (\point) circle (.1); 
\node[right] at (A) {$a$};
\node[left] at (B) {$b$};
\node[above] at (C) {$c$};
\node[right] at (D) {$d$};
\end{tikzpicture}\\
{\tiny $\Delta=\F(I)$}  $\qquad$ & $\qquad$ {\tiny $\Gamma=\N(I)=\Ind(\Delta)$}
\end{tabular}
\end{center}
\end{example}

\begin{example}[{\bf Edge ideals}]  Let $G = (V,E)$ be a finite simple
graph with vertices $V = \{x_1,\ldots,x_n\}$ and
edge set $E$.  If we view $G$ as a simplicial complex with 
its edges the facets, then $\mathcal{F}(G) = 
( x_ix_j \st \{x_i,\,x_j\} \in E )$
in
$k[x_1,\ldots,x_n]$. The ideal $\mathcal{F}(G)$ is in fact the well-known  {\bf edge ideal} of $G$, 
and is normally denoted by $I(G)$.  The Stanley-Reisner
complex associated to $I(G)$ is the 
{\bf independence complex} $\Ind(G)$
of $G$.
Simplicial complexes that are the independence complex of some graph are also called {\bf flag complexes}.
\end{example}

\subsection{The Cohen-Macaulay property}
The Cohen-Macaulay property -- defined algebraically for a local ring $R$ via the property that $\depth(R)=\dim(R)$ --  is the dominating algebraic property of the square-free monomial ideals that we study in this paper. For quotients by square-free monomial ideals, the Cohen-Macaulay property was characterized  by Reisner~\cite{Reisner} in terms of the vanishing of reduced homology groups of the links of faces of Stanley-Reisner complexes.  Reisner's criterion has been used to great effect in combinatorics ever since its appearance, and remains the most concrete description of Cohen-Macaulayness for the class of square-free monomial ideals. In \cref{d:CM} we will use Reisner's criterion as the definition of Cohen-Macaulay square-free monomial ideals.

\begin{definition}[{\bf Cohen-Macaulay ideals (Reisner's Criterion)}] \label{d:CM}
Let $I$ be a square-free monomial ideal
of $R=k[x_1,\ldots,x_n]$ with non-face
complex $\Delta = \mathcal{N}(I)$.  Then
$R/I$ is a {\bf Cohen-Macaulay ring} if and only if
for any $F \in \Delta$, 
$$
\tilde{H}_i(\lk_\Delta(F),k) = 0
\qfor 
i < \dim (\lk_\Delta(F)).$$ 
We call $\Delta$ a {\bf Cohen-Macaulay
simplicial complex} if $R/\mathcal{N}(\Delta)$ 
is a Cohen-Macaulay ring.
\end{definition}

The simplest example of a simplicial complex that satisfies the conditions in \cref{d:CM} is a simplex, where all homologies of links are zero.
A more interesting basic example is a hollow simplex, where all the link homologies are one-dimensional.
The hollow simplex is 
an example of what is known as a \say{homology sphere}. For a field $k$, a  simplicial $\Delta$  is a {\bf $k$-homology sphere} if for all $F \in \Delta$
\[
\tilde{H}_i(\lk_\Delta(F),k) 
    = \begin{cases} k & \mbox{if $i = 
    \dim (\lk_\Delta(F))$} \\
    0 & \mbox{otherwise.}
     \end{cases}
\]
Simplicial spheres, introduced earlier,  are examples of homology spheres and are therefore Cohen-Macaulay ({\cite[Theorem 5.1, Corollary 5.2]{Stanley1996}}). It is known that every Cohen-Macaulay 
simplicial complex is strongly-connected~(\cite[Proposition 11.7]{Bj1995}, \cite[Proposition 1.2.12]{A2017}).

The various classes of simplicial complexes
introduced in this section satisfy
the implications
\vspace*{5pt}
\begin{equation}\label{implications}
\begin{array}{ccccccc}
\mbox{simplicial sphere}  & \Rightarrow &
\mbox{homology sphere} & \Rightarrow & 
\mbox{pseudomanifold} & \Rightarrow & 
\mbox{pure} \\
& & & \Searrow &  &\Searrow & \Uparrow \\
\mbox{vertex decomposable} & \Rightarrow &
\mbox{shellable} & \Rightarrow  & 
\mbox{Cohen--Macaulay} & \Rightarrow & 
\mbox{strongly-connected}\\
\end{array}.
\end{equation}

All of the reverse implications in \eqref{implications}
fail to hold in general.  However, as we 
will show in the sequel, 
implications can be reversed for special families of independence complexes.

We conclude this section with a description of the \say{polarization} of a monomial ideal, which is an operation that transforms a general monomial ideal into a square-free one, while maintaining its Cohen-Macaulay properties. If $m=x_1^{a_1}\cdots x_n^{a_n} \in k[x_1,\ldots,x_n]$ is a monomial, then its {\bf polarization} $\P(m)$ is the square-free monomial
$$\P(m)= (x_{1,1}x_{1,2}\cdots x_{1,a_1}) \cdots (x_{n,1}\cdots x_{n,a_n}) \in k[x_{i,j} \st 1 \leq i \leq n, \ 1  \leq j \leq a_i].$$
If $I$ is minimally generated by monomials $m_1,\ldots,m_q$, then the {\bf polarization of} $I$ is the square-free monomial ideal
$$\P(I)=\big( \P(m_1),\ldots,\P(m_q) \big )$$ 
in the polynomial ring containing all variables appearing in the monomials $\P(m_1),\ldots,\P(m_q)$.

\section{Grafting, whiskering, spheres, and generalized Bier balls}

 When the Cohen-Macaulay property is present in  a monomial ideal, both its facet and Stanley-Reisner complexes demonstrate strong combinatorial characteristics. However, despite Reisner's criterion, it is quite difficult to determine if a ring is Cohen-Macaulay, because the process involves computing homology groups, which itself could depend on the characteristic of the base field. A fruitful line of research, therefore, has been to identify classes of monomial ideals $I$ for which we can tell, based on the combinatorics of $\N(I)$ or $\F(I)$, that $I$ is Cohen-Macaulay no matter what field we use. 
 For example, when $\N(I)$ is a simplicial sphere, 
 then $I$ must be Cohen-Macaulay.
 Similarly, if $\F(I)$ is \say{grafted} $I$ must be Cohen-Macaulay.  Grafting~\cite{Fa2005}  is essentially adding generators to a square-free monomial ideal in a way that the extended ideal is Cohen-Macaulay.
This is a generalization of {\it whiskering} a graph, first discovered by
Villarreal \cite{V1990}. We begin by recalling how to graft a simplicial
complex.

For a simplicial complex $\Delta$ with vertex set $V$, a {\bf leaf} $F$ is a facet where either $F$ is the only facet of $\Delta$, or there is another facet $G$ of $\Delta$   such that 
$F \cap H \subseteq G$ for all facets $H \neq F$.  The facet $G$ is called a {\bf joint} of $F$ (see \cites{Fa2002, Fa2005}).

The operation of grafting is done via adding leaves to a simplicial complex.
In the statement below, $V(\Delta')$ refers to the vertices
of $\Delta'$, and $V(F)$ refers to the vertices of the facet $F$.

\begin{definition}[{\bf Grafting~\cite{Fa2005}}]\label{d:grafting}
A simplicial complex $\Delta$ is a {\bf grafting} of the simplicial complex $\Delta' = \langle G_1, \ldots, G_s \rangle$ with the simplices $F_1, \ldots, F_r$ (or, $\Delta$ is said to be {\bf grafted}) if 
$$\Delta = \langle F_1, \ldots, F_r \rangle \cup \langle G_1, \ldots, G_s \rangle$$
with the following properties:
\begin{enumerate}
\item $V(\Delta') \subseteq V(F_1) \cup \cdots \cup V(F_r)$;
\item $F_1, \ldots, F_r$ are all the leaves of $\Delta$;
\item $\{G_1, \ldots, G_s\} \cap \{F_1, \ldots, F_r\} = \emptyset$;
\item for $i \not = j$, $F_i \cap F_j = \emptyset$;
\item if $G_i$ is a joint of $\Delta$, then $\Delta \setminus \langle G_i \rangle$ is also grafted.
\end{enumerate}
In the special case where the leaves $F_1,\ldots,F_r$ are edges (i.e., $1$-dimensional), then $\Delta$ is called a {\bf whiskered}
simplicial complex.
\end{definition}

\begin{example} The facet complex of  
 $I_2=(x_1x_2x_3, x_1x_2x_4,x_1x_3x_4, x_1x_3y_1,  x_4y_2, x_2y_3)$ seen below, is a grafting of the facet complex of the ideal $I_1=(x_1x_2x_3, x_1x_2x_4,x_1x_3x_4)$.
By \cite[Theorem 7.6]{Fa2005} the ideal $I_2$  is Cohen-Macaulay, obtained by adding generators (leaves) to $I_1$.
$$
\begin{tabular}{ccc}
\begin{tikzpicture}[scale=0.5]
 \coordinate (A) at (0,0);
 \coordinate (AA) at (0,-.1);
 \coordinate (B) at (-2,-2);
 \coordinate (C) at (0,2);
 \coordinate (D) at (2,-2);

  \draw [fill=gray!15] (A.center) -- (B.center) -- (C.center)--(A.center);
  \draw [fill=gray!15] (A.center) -- (B.center) -- (D.center)--(A.center);
  \draw [fill=gray!15] (A.center) -- (D.center) -- (C.center)--(A.center);
 
\foreach \point in {A, B, C, D} \fill[fill=white,draw=black] (\point) circle (.1);
 \node[below] at (AA) {$x_1$};
 \node[left] at (B) {$x_2$};
 \node[above] at (C) {$x_3$};
 \node[right] at (D) {$x_4$};

\end{tikzpicture}
&
\begin{tabular}{c}
{\tiny Grafting}\\
$\Longrightarrow$\\
\\ \\ \\
\end{tabular}
&
 \begin{tikzpicture}[scale=0.5]
 \coordinate (A) at (0,0);
  \coordinate (AA) at (0,-.1);
    \coordinate (B) at (-2,-2);
    \coordinate (C) at (0,2);
    \coordinate (D) at (2,-2);
    \coordinate (E) at (3,1.75);
    \coordinate (F) at (3.75,0);
    \coordinate (G) at (-3.75,0);
 
  \draw [fill=gray!15] (A.center) -- (B.center) -- (C.center)--(A.center);
  \draw [fill=gray!15] (A.center) -- (B.center) -- (D.center)--(A.center);
  \draw [fill=gray!15] (A.center) -- (D.center) -- (C.center)--(A.center);
  \draw [fill=gray!05] (E.center) -- (A.center) -- (C.center)--(E.center);
  \draw [-] (D.center)-- (F.center);
  \draw [-] (B.center)-- (G.center);
\foreach \point in {A, B, C, D, E, F, G} \fill[fill=white,draw=black] (\point) circle (.1);
 \node[below] at (AA) {$x_1$};
 \node[left] at (B) {$x_2$};
 \node[above] at (C) {$x_3$};
 \node[right] at (D) {$x_4$};
 \node[right] at (E) {$y_1$};
 \node[right] at (F) {$y_2$};
 \node[right] at (G) {$y_3$};
\end{tikzpicture}
\end{tabular}
$$
 \end{example}
 
\begin{remark}[{\bf Whiskering a graph}~\cite{V1990}]
    In 1990, Villarreal introduced the whiskering construction for the edge ideal of a graph, citing Vasconcelos and Herzog's observations that this process served as a way to obtain a Cohen-Macaulay square-free monomial ideal $J = w(I)$ from any square-free monomial ideal $I$, by adding quadratic monomials to the ideal. Then in 2005, Faridi introduced the notion of grafted simplicial complexes to generalize Villarreal's construction, in the sense that higher degree monomials could be added to any monomial ideal to produce a Cohen-Macaulay ideal.  Both of these constructions were introduced in a different language by Bier in an unpublished manuscript from 1992, and Murai in~\cite{M2011}. The notion of a whiskered graph specifically has also appeared in the graph theory literature under several names, for example the \say{leafy-extension} of a graph~\cite{BC2018} and the \say{corona product} of a graph with a single vertex~\cite{FH1970}.  
\end{remark}

As we will see below in \cref{thm.mainthm-new}, the independence complex of a grafted complex is exactly what is known as a {\bf generalized Bier ball}, which was  defined by Murai~\cite{M2011} as the Stanley-Reisner complex of the polarization of an artinian monomial ideal.
Bier spheres first appeared in unpublished work of Thomas Bier in 1992 
according to~\cite{M2011}. 
This construction was then generalized by Murai in~\cite{M2011}, where it is shown that the independence complex of the polarization of any artinian monomial ideal is a simplicial ball or sphere. The original case due to Bier is the case of an artinian monomial ideal that contains the square of every variable, or, by  \cref{thm.mainthm-new}, the independence complex of a whiskered simplicial complex.  Generalized Bier balls are always balls, except for one special case where they turn out to be spheres (as shown below).

 \begin{theorem}[{\bf Independence complexes of grafted complexes}]\label{thm.mainthm-new}
 Let $\Delta$ be a simplicial complex, let $\Gamma = \Ind(\Delta)$
  be the independence complex of $\Delta$,  and let $I=\F(\Delta)=\N(\Gamma) \subseteq k[x_1,\ldots,x_n]$. The following are equivalent.
\begin{itemize}
    \item $\Delta$ is grafted;
    \item $\Gamma$ is a generalized Bier ball;
    \item $I$ is the polarization of an artinian monomial ideal.
\end{itemize}
 If $I$ is the polarization of an ideal minimally generated by monomials $\{x_1^{a_1},\ldots,x_n^{a_n}\} \cup A$, then 
 $\Gamma$ is vertex decomposable and $\dim(\Gamma)=a_1+\cdots+a_n -n- 1$. Moreover, 
 $\Gamma$ is a simplicial sphere if and only if $A=\emptyset$, and is otherwise a simplicial ball.
\end{theorem}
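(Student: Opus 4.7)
The plan is to prove the three-fold equivalence via (2) $\Leftrightarrow$ (3) $\Leftrightarrow$ (1) and then to read off the numerical and topological conclusions from the polarization description together with \cref{t:sphere-ball}. The equivalence (2) $\Leftrightarrow$ (3) is immediate from Murai's definition: a generalized Bier ball is by definition the Stanley--Reisner complex $\N(\P(J))$ of the polarization of an artinian monomial ideal, and $\Gamma=\N(I)$, so $\Gamma$ is a generalized Bier ball precisely when $I=\P(J)$ for some artinian $J$.

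For (3) $\Rightarrow$ (1), I would write $J=(x_1^{a_1},\ldots,x_n^{a_n})+(m_1,\ldots,m_s)$ minimally; by minimality each $m_j$ involves at least two distinct variables. Take the facets $F_i=\{x_{i,1},\ldots,x_{i,a_i}\}$ (from the polarized pure powers) and $G_j=\operatorname{supp}(\P(m_j))$ as candidate leaves and joints of $\Delta=\F(I)$. The axioms of \cref{d:grafting} then follow by direct inspection: the $F_i$ are pairwise disjoint and exhaust the vertex set; each $G_j$ meets at least two disjoint $F_i$'s and so differs from every leaf; each $F_i$ is a leaf, because the intersections $F_i\cap G_j=\{x_{i,1},\ldots,x_{i,b_{i,j}}\}$ (writing $b_{i,j}$ for the $x_i$-exponent of $m_j$) are nested prefixes, so the longest one serves as the required joint; no $G_j$ is a leaf, since its intersections with two distinct leaves $F_i,F_k$ occupy disjoint vertex-groups no other facet can absorb; condition (5) follows by inducting on $s$ after dropping a single $m_j$ from $J$.

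For (1) $\Rightarrow$ (3), the main technical step---and the principal obstacle---is the \emph{prefix chain property}: for each leaf $F_i$, the collection $\{G_j\cap F_i:1\leq j\leq s\}$ is totally ordered by inclusion. The leaf condition on $F_i$ immediately yields a single joint $G^{(i)}$ whose intersection with $F_i$ dominates every $G_j\cap F_i$, but pairwise comparability is not forced by this alone. My approach is induction on $s$ using condition (5): remove a joint $G_l$ whose intersection with a distinguished leaf is maximal, apply the inductive hypothesis to the smaller grafted complex $\Delta\setminus\langle G_l\rangle$, and reattach $G_l$ at the top of the chain; a careful ordering ensures this can be done consistently for all leaves at once. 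Once the chain property is secured, relabel the vertices of each $F_i$ compatibly so every $G_j\cap F_i$ becomes the prefix $\{x_{i,1},\ldots,x_{i,b_{i,j}}\}$ with $b_{i,j}:=|G_j\cap F_i|$, and set
\[
J\;:=\;(x_1^{a_1},\ldots,x_n^{a_n})+\bigl(x_1^{b_{1,j}}\cdots x_n^{b_{n,j}}\;:\;1\leq j\leq s\bigr).
\]
Then $J$ is artinian and $\P(J)=\F(\Delta)=I$ by direct comparison of generators.

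Finally, $W_0:=\{x_{i,j}:2\leq j\leq a_i\}$ is a maximal independent set of $\Delta$: it contains no $F_i$ (as $x_{i,1}\notin W_0$), no $G_j$ (since each $G_j$ contains some $x_{i,1}$), and adjoining any $x_{i,1}$ completes $F_i$. Hence $\dim\Gamma=|W_0|-1=\sum a_i-n-1$. For the topology I would apply \cref{t:sphere-ball}: $\Gamma$ is a pseudomanifold because any codimension-one face $W^*\setminus\{x_{i,j}\}$ lies in at most two facets---the original $W^*$ and the swap exchanging $x_{i,j}$ with the unique missing element of $W^*\cap F_i$, whenever the result is independent; vertex decomposability follows by inducting on $\sum a_i+s$ via the shedding vertex $x_{i,a_i}$, whose link and deletion in $\Gamma$ again correspond to independence complexes of polarizations of smaller artinian ideals. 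When $A=\emptyset$ every such swap is independent, so $\Gamma$ is boundary-free and equals the join $\partial F_1 * \cdots * \partial F_n$ of boundary spheres $S^{a_i-2}$, namely $S^{\sum a_i-n-1}$. When $A\neq\emptyset$, for any $m_l\in A$ I would construct a facet $W^*$ with $k_{i_0}=b_{i_0,l}$ for some $i_0$ with $b_{i_0,l}\geq 1$ and $k_{i''}>b_{i'',l}$ otherwise (adjusting if necessary to remain independent against other joints); the ridge $W^*\setminus\{x_{i_0,b_{i_0,l}+1}\}$ has its only candidate swap blocked by $G_l$, producing a boundary and hence a ball.
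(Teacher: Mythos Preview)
The paper's proof is a sequence of citations: the equivalence of ``grafted'' with ``polarization of an artinian ideal'' is read off from the proof of \cite[Theorem~8.2]{Fa2005}, the identification with generalized Bier balls is Murai's definition, and vertex decomposability together with the ball/sphere dichotomy are \cite[Lemma~1.4, Remark~1.8]{M2011}. You instead attempt to rebuild everything from scratch; this is more informative, and your arguments for $(2)\Leftrightarrow(3)$, for $(3)\Rightarrow(1)$, for the dimension count, for the pseudomanifold claim, and for the sphere case are all sound.

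The real gap is in $(1)\Rightarrow(3)$. The obstacle you flag---arranging the chain orders ``consistently for all leaves at once''---is a red herring: once each family $\{F_i\cap G_j\}_j$ is totally ordered, you may relabel the vertices of each $F_i$ independently and read off the artinian ideal directly, with no coordination across leaves needed. What is actually missing sits one step earlier. When you delete a joint $G_l$ and invoke condition~(5) of \cref{d:grafting}, you learn only that $\Delta\setminus\langle G_l\rangle$ is grafted with respect to \emph{some} decomposition; to iterate and produce the next element of the chain for $F_i$ you need the leaves of this smaller complex to still be exactly $F_1,\dots,F_r$, so that $F_i$ again has a joint there. This does hold, and the reason is precisely your own argument from $(3)\Rightarrow(1)$: since $G_j=\bigcup_k(G_j\cap F_k)$ with the $F_k$ pairwise disjoint, any purported joint of $G_j$ would have to contain all of $G_j$, so no $G_j$ can ever be a leaf of a subcomplex that still contains all the $F_k$'s; and since the leaves of a grafted complex partition the vertex set while the $F_k$ already do, the leaves of $\Delta\setminus\langle G_l\rangle$ must be exactly $F_1,\dots,F_r$. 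With this in hand your induction runs cleanly, one leaf at a time. Your boundary construction in the ball case is likewise only a sketch---the point you elide under ``adjusting if necessary'' is that minimality of the generating set of $J$ is exactly what guarantees the candidate $W^*$ contains no $G_j$---but this is routine once the facets of $\Gamma$ have been described as in your pseudomanifold argument.
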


\begin{proof} 
The proof of Theorem~8.2 in \cite{Fa2005} shows that the facet ideal of a grafted simplicial complex is the polarization of an artinian monomial ideal, which by Murai~\cite[Lemma~3.2]{M2011}  is the Stanley-Reisner ideal of a generalized Bier ball. 
The remaining statements are \cite[Lemma~1.4,~Remark~1.8]{M2011}. See also~\cite[Theorem 2.16]{MRV2008}.
\end{proof}

\begin{example} We will highlight the cases where we get a ball or a sphere.
Let $I_1= (ab,bc,cd)$ and $I_2=(ab,cd)$ be polarizations
of the artinian monomial ideals $(x^2,xy,y^2)$ and 
$(x^2,y^2)$, respectively. The facet and independence complexes of $I_1$ and $I_2$ are then the following. 

\begin{center}
\begin{tabular}{cccc}
\begin{tikzpicture}[scale=0.5]
\draw (0,0) -- (0,3) -- (3,3) -- (3,0); 
\fill[fill=white,draw=black] (0,0) circle (.1)
node[label=left:$a$] {};
\fill[fill=white,draw=black] (0,3) circle (.1)
node[label=left:$b$] {};
\fill[fill=white,draw=black] (3,3) circle (.1)
 node[label=right:$c$] {};
\fill[fill=white,draw=black] (3,0) circle (.1)
node[label=right:$d$] {};
\end{tikzpicture}
$\quad$& $\quad$
\begin{tikzpicture}[scale=0.5]
\draw (0,3) -- (0,0) -- (3,0) -- (3,3); 
\fill[fill=white,draw=black] (0,0) circle (.1)
node[label=left:$a$] {};
\fill[fill=white,draw=black] (0,3) circle (.1)
node[label=left:$c$] {};
\fill[fill=white,draw=black] (3,3) circle (.1)
 node[label=right:$b$] {};
\fill[fill=white,draw=black] (3,0) circle (.1)
node[label=right:$d$] {};
\end{tikzpicture}
$\quad$& $\quad$
\begin{tikzpicture}[scale=0.5]
\draw (0,0) -- (0,3);
\draw (3,3) -- (3,0); 
\fill[fill=white,draw=black] (0,0) circle (.1)
node[label=left:$a$] {};
\fill[fill=white,draw=black] (0,3) circle (.1)
node[label=left:$b$] {};
\fill[fill=white,draw=black] (3,3) circle (.1)
 node[label=right:$c$] {};
\fill[fill=white,draw=black] (3,0) circle (.1)
node[label=right:$d$] {};
\end{tikzpicture}
$\quad$& $\quad$
\begin{tikzpicture}[scale=0.5]
\draw (0,0) -- (0,3) -- (3,3) -- (3,0) -- (0,0); 
\fill[fill=white,draw=black] (0,0) circle (.1)
node[label=left:$d$] {};
\fill[fill=white,draw=black] (0,3) circle (.1)
node[label=left:$b$] {};
\fill[fill=white,draw=black] (3,3) circle (.1)
 node[label=right:$c$] {};
\fill[fill=white,draw=black] (3,0) circle (.1)
node[label=right:$a$] {};
\end{tikzpicture}\\
{\tiny $\Delta_1$ facet complex of $I_1$}
$\quad$& $\quad$
{\tiny The $1$-ball $\Ind(\Delta_1)$} 
$\quad$& $\quad$
{\tiny $\Delta_2$ facet complex of $I_2$}
$\quad$& $\quad$
{\tiny The $1$-sphere $\Ind(\Delta_2)$}
\end{tabular}
\end{center}
\end{example}

A byproduct of \cref{thm.mainthm-new}  is new conditions added to the list of equivalent conditions for facet ideals of simplicial trees (or edge ideals of graphs) to be Cohen-Macaulay. The first three equivalences in \cref{cor.trees} were proved for graphs in~\cite{V1990} and generalized to simplicial complexes in~\cite{Fa2005}. The new items are consequences of \cref{thm.mainthm-new}.

\begin{corollary}[{\bf Cohen-Macaulay (simplicial) trees}]\label{cor.trees}
If $\Delta$ is a simplicial tree (and in particular a graph which is a tree),  then the following are equivalent.
\begin{itemize}
    \item $\Delta$ is unmixed; 
\item $R/\F(\Delta)$ is Cohen-Macaulay;
\item $\Delta$ is grafted;
\item $\Ind(\Delta)$ is a generalized Bier ball;
\item $\Ind(\Delta)$ is vertex decomposable. 
\end{itemize}
\end{corollary}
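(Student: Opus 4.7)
The plan is to assemble the claimed equivalences from results that have already been established. First, I would cite the classical portion: the equivalence of $\Delta$ being unmixed, $R/\F(\Delta)$ being Cohen-Macaulay, and $\Delta$ being grafted is due to Villarreal~\cite{V1990} in the graph case and Faridi~\cite{Fa2005} for general simplicial trees, so no new argument is needed to close the triangle among those three conditions.

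Next, I would invoke \cref{thm.mainthm-new} to link grafting with the two new items. That theorem states precisely that $\Delta$ is grafted if and only if $\Gamma := \Ind(\Delta)$ is a generalized Bier ball, and it additionally asserts that when $\Delta$ is grafted, $\Gamma$ is vertex decomposable. This gives the forward passage from the classical list to each of the new items.

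To close the cycle, I would show that vertex decomposability of $\Gamma$ implies $R/\F(\Delta)$ is Cohen-Macaulay, which brings us back to the classical three. Reading \eqref{implications}, vertex decomposable simplicial complexes are Cohen-Macaulay, so $R/\N(\Gamma)$ is Cohen-Macaulay; but \eqref{e:connectfacetSR} gives $\N(\Gamma) = \F(\Delta)$, hence $R/\F(\Delta)$ is Cohen-Macaulay. Since every implication is either cited or inherited from the theorems above, there is no substantive obstacle; the only subtlety is to remember that vertex decomposability here is a property of $\Gamma = \Ind(\Delta)$ rather than of $\Delta$, so the Cohen-Macaulayness it produces must be transferred via the identity $\N(\Ind(\Delta)) = \F(\Delta)$ before it can be matched against the first three items on the list.
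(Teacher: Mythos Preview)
Your proposal is correct and mirrors the paper's approach: the first three equivalences are cited from \cite{V1990} and \cite{Fa2005}, and the two new items are derived from \cref{thm.mainthm-new}. Your added care in closing the cycle via vertex decomposable $\Rightarrow$ Cohen-Macaulay (using \eqref{implications} and \eqref{e:connectfacetSR}) just makes explicit what the paper leaves implicit when it says the new items are ``consequences'' of \cref{thm.mainthm-new}.
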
 

\section{Simplical balls and spheres via general whiskering}

The previous section showed that 
grafting a simplicial complex, which was defined in 2005  as a generalization of Villarreal's graph whiskering from 1990,  results in a simplicial complex whose independence complex is a simplicial
ball or sphere. 
In 2012 Cook and Nagel \cite{CN2012} introduced another generalization of Villarreal's graph-whiskering called the  
\say{clique-whiskering} of graphs. To clique-whisker a graph, 
the vertex set $V$ is partitioned into 
$V = V_1 \cup \cdots \cup V_s$ 
such that the induced graph on each $V_i$ is a clique, that is, a subset of the vertices
that are all adjacent to each other.
For each clique, a new vertex is added to the graph, 
and this new vertex is joined to all vertices in the clique.  Whiskered graphs are then the case that the graph
is partitioned into cliques of single vertices.  
The independence complexes of clique-whiskered graphs
enjoy many similar properties to 
independence complexes of whiskered graphs, e.g.,
they are pure and vertex decomposable, and in particular, Cohen-Macaulay.

Biermann and Van Tuyl~\cite{BVT2013} in 2013 generalized
Cook and Nagel's construction to all simplicial complexes.
In vew of the earlier discussions, it is natural to ask if any of these Cohen-Macaulay \say{generalized whiskered complexes} of Biermann and Van Tuyl are 
simplicial balls or spheres.
It turns out that this is rarely the case, and when it is, the generalized whiskered complex is in fact the independence complex of a grafted (or more specifically, whiskered) complex.  

We begin by recalling the construction of a generalized whiskered complex from~\cite{BVT2013}.   
Let $\Delta$ be a simplical
complex on the vertex set $V$ 
with facets $F_1,\ldots,F_t$.  
An {\bf $s$-colouring} $\chi$ is a partition $V = V_1 \cup V_2 \cup \cdots \cup V_s$ 
such that for each  $|V_i \cap F| \leq 1$ for all 
$i=1,\ldots,s$ and every facet $F$.  We do allow some of the $V_i$ to be empty.

\begin{example}[{\bf The $n$-colouring}]\label{ex.ncolour}
    Suppose that $\Delta$ is a simplicial complex on
    $V = \{x_1,\ldots,x_n\}$. Then
    $V = \{x_1\}\cup \{x_2\} \cup \cdots \cup \{x_n\}$
    is an $n$-colouring of $\Delta$ since
    $|\{x_i\} \cap F| \leq 1$ for all facets 
    $F \in \Delta$.
\end{example}

From a simplicial complex $\Delta$ and an  $s$-colouring $\chi$, we construct another simplicial complex $\Delta_{\chi}$ on vertex set $V \cup \{y_1, \ldots, y_s\}$. For a face $F$ of $\Delta$, let 
$$Y_F
=\{y_j \st F \cap V_j = \emptyset\} 
\subseteq \{y_1,\ldots,y_s\}.$$
Then the faces of $\Delta_{\chi}$ are the sets $F \cup G$ such that $F$ is a face of $\Delta$ and $G \subseteq Y_F$. In particular
    \begin{equation}\label{defn.generalwhisker}
    \Delta_\chi = 
    \left \langle 
    F \cup Y_F \st 
    F \in \Delta
   \right \rangle.
\end{equation}

\begin{example}\label{e:run1}
Consider the simplicial complex
on $\{x_1,x_2,x_3,x_4\}$ given by
$\Delta  = \langle \{x_1,x_2,x_3\}, \{x_3,x_4\} \rangle$.  Consider the colouring
$\chi = \{x_1,x_4\} \cup \{x_2\} \cup 
\{x_3\}$ below, represented by the different shaded vertices.
$$
\begin{tikzpicture}[scale=0.7]
    \coordinate (X1) at (-1,-1);
    \coordinate (X2) at (0,1);
    \coordinate (X3) at (1,-1);
    \coordinate (X4) at (3,-1);
  \draw (X3) -- (X4);
  \draw [fill=gray!15] (X1.center) -- (X2.center) -- (X3.center)--(X1.center);
\fill[fill=white,draw=black] (X4) circle (.1);
\node[right] at (X4) {$x_4$};
\fill[fill=white,draw=black] (X1) circle (.1);
 \node[left] at (X1) {$x_1$};
 \fill[fill=black,draw=black] (X2) circle (.1);
 \node[left] at (X2) {$x_2$};
 \fill[fill=gray,draw=black] (X3) circle (.1);
 \node[above] at (X3) {$\quad x_3$};
 \end{tikzpicture}
$$
Then the facets of $\Delta_\chi$,
which is a simplicial complex on
$\{x_1, x_2, x_3, x_4,y_1,y_2,y_3\}$, 
are 
$$\begin{array}{ccccc}
 \{x_1,x_2,x_3\} &  \{x_1,x_2,y_3\}
& \{x_1,y_2,x_3\} & \{y_1,x_2,x_3\} & 
\{x_3,x_4,y_2\} \\
\{x_1,y_2,y_3\} & \{y_1,x_2,y_3\} & 
\{y_1,y_2,x_3\} & \{y_2,y_3,x_4\} 
& \{y_1,y_2,y_3\}.
\end{array}$$

\end{example}

The following theorem recalls some properties
of $\Delta_\chi$.

\begin{theorem}[{\cite[Theorems 3.5, 3.7]{BVT2013}}]\label{thm.whiskeredcomplexprop}
Let $\Delta$ be a simplicial
complex on $V =\{x_1,\ldots,x_n\}$. 
Suppose that the partition $V=V_1\cup \cdots \cup V_s$ is an $s$-colouring $\chi$ of $\Delta$. Then

\begin{enumerate}
    \item $\dim \Delta_\chi=s-1$;
    \item $\Delta_\chi$ is vertex
    decomposable, and thus shellable, Cohen--Macaulay, and pure.
\end{enumerate}
\end{theorem}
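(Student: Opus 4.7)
My plan for (1) is a cardinality count. For every face $F \in \Delta$, the colouring condition $|F \cap V_j| \le 1$ forces $F$ to meet exactly $|F|$ of the colour classes, so $|Y_F| = s - |F|$ and hence $|F \cup Y_F| = s$ independently of $F$. Since $V$ and $\{y_1, \ldots, y_s\}$ are disjoint, $F$ is recovered from $F \cup Y_F$ as its intersection with $V$, so distinct faces $F \in \Delta$ yield distinct generators in \eqref{defn.generalwhisker}. Therefore every generator is itself a facet of $\Delta_\chi$ of cardinality $s$, the complex is pure, and $\dim \Delta_\chi = s - 1$.

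For (2) I would argue by induction on $|V|$. The base case $V = \emptyset$ forces $\Delta \in \{\emptyset, \{\emptyset\}\}$, making $\Delta_\chi$ either empty or the simplex $\langle \{y_1, \ldots, y_s\} \rangle$, either way vertex decomposable. For the inductive step, pick any $x \in V$, say $x \in V_i$, and claim $x$ is a shedding vertex of $\Delta_\chi$. The two key identifications are
\[
\lk_{\Delta_\chi}(x) = (\lk_\Delta(x))_{\chi''} \qand \del_{\Delta_\chi}(x) = (\del_\Delta(x))_\chi,
\]
where $\chi''$ is the $(s-1)$-colouring of $\lk_\Delta(x)$ induced by the $V_j$ with $j \ne i$ (note that $V(\lk_\Delta(x)) \subseteq V \setminus V_i$ by the colouring condition), and on the right we restrict $\chi$ to $V \setminus \{x\}$ by replacing $V_i$ with $V_i \setminus \{x\}$. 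Both identities follow by comparing facets and tracking how $Y_F$ transforms: if $x \in F$ then $y_i \notin Y_F$ and $Y_F = Y_{F \setminus \{x\}} \setminus \{y_i\}$, while if $x \notin F$ the set $Y_F$ is unchanged under restriction. Because $|V(\lk_\Delta(x))| \le |V| - |V_i| < |V|$ and $|V \setminus \{x\}| = |V| - 1$, the induction hypothesis supplies vertex decomposability of both the link and the deletion.

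The main obstacle is verifying the shedding condition: every facet of $\del_{\Delta_\chi}(x)$ must be a facet of $\Delta_\chi$. The danger is that for a face $F \ni x$ of $\Delta$, the face $F \cup Y_F \setminus \{x\}$ of $\del_{\Delta_\chi}(x)$ has size only $s - 1$ and could a priori be maximal. I would rule this out via the strict containment
\[
F \cup Y_F \setminus \{x\} \subsetneq (F \setminus \{x\}) \cup Y_{F \setminus \{x\}},
\]
using the identity $Y_{F \setminus \{x\}} = Y_F \cup \{y_i\}$, which holds because removing $x \in V_i$ from $F$ frees up the colour class $V_i$. The larger set is a facet of $\Delta_\chi$ of cardinality $s$ that does not contain $x$, so it lies in $\del_{\Delta_\chi}(x)$ and certifies that $F \cup Y_F \setminus \{x\}$ is not maximal. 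Consequently every facet of $\del_{\Delta_\chi}(x)$ has the form $F \cup Y_F$ with $x \notin F$, hence is a facet of $\Delta_\chi$. Combined with the previous paragraph this establishes vertex decomposability of $\Delta_\chi$, and together with the purity from the first paragraph, the implications in \eqref{implications} then yield shellability and the Cohen--Macaulay property.
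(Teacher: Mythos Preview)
The paper does not provide a proof of this theorem; it is quoted directly from \cite{BVT2013} and used as a black box, so there is no in-paper argument to compare against. That said, your argument is correct and is essentially the standard proof one would give (and is in the spirit of the original \cite{BVT2013} arguments): the cardinality count for part~(1) is clean, and the two identifications $\lk_{\Delta_\chi}(x) = (\lk_\Delta(x))_{\chi''}$ and $\del_{\Delta_\chi}(x) = (\del_\Delta(x))_\chi$ are exactly what drives the induction. Your verification of the shedding condition via $Y_{F\setminus\{x\}} = Y_F \cup \{y_i\}$ is correct; note that once you have $\del_{\Delta_\chi}(x) = (\del_\Delta(x))_\chi$, the shedding condition also follows immediately from part~(1), since $(\del_\Delta(x))_\chi$ is then pure of dimension $s-1$ and every $(s-1)$-dimensional face of $\Delta_\chi$ is already a facet. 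The only cosmetic point is to remark explicitly that the restricted partition on $V\setminus\{x\}$ is still an $s$-colouring (the paper allows empty colour classes) and that the induced $(s-1)$-partition on $V(\lk_\Delta(x))$ is a valid colouring because faces of $\lk_\Delta(x)$ are faces of $\Delta$; you use both facts implicitly.
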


When $\chi$ is the
$n$-colouring described in  \Cref{ex.ncolour}, we show that  
$\Delta_\chi$ is exactly  the independence complex of a whiskered simplicial complex~(\cref{d:grafting}, see also~{\cite[Remark 3.16]{BVT2013}}).

\begin{proposition}\label{lem:coloredwhisker} \label{cor.facetsncolouring}
Suppose that $\Delta$ is a simplicial
complex on $V =\{x_1,\ldots,x_n\}$, and
$\chi$ is the $n$-colouring 
$\{x_1\}\cup \cdots \cup \{x_n\}$.
Then 
\begin{enumerate}
\item every facet $G \in \Delta_\chi$ has the form
$$G = \{z_1,\ldots,z_n\} ~~\mbox{where $z_i \in \{x_i,y_i\}$~
for $i=1,\ldots,n$}.$$

\item If $\Delta=\Ind(\Gamma)$ for some simplicial complex $\Gamma$, then $\Delta_\chi$ is the independence complex of a  whiskering of $\Gamma$.
In other words, 
$$\N(\Delta_{\chi}) = \N(\Delta) + 
( x_1 y_1, \dots, x_n y_n ).$$
\end{enumerate}
\end{proposition}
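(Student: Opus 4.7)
The plan is to prove (1) first by a purity/size count, and then use the transversal description from (1) to read off the minimal non-faces of $\Delta_\chi$, which gives (2) after translating into Stanley-Reisner ideals.

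For (1), I would appeal to \cref{thm.whiskeredcomplexprop}, which gives $\dim \Delta_\chi = s-1 = n-1$, so every facet of $\Delta_\chi$ has exactly $n$ vertices. By the definition in \eqref{defn.generalwhisker}, every facet arises as $F \cup Y_F$ for some $F \in \Delta$. Under the $n$-colouring, $Y_F = \{y_j \st x_j \notin F\}$, hence $|F \cup Y_F| = |F| + (n-|F|) = n$, matching the purity. So every generator $F \cup Y_F$ is automatically a facet, and it contains $x_i$ precisely when $x_i \in F$ and $y_i$ precisely when $x_i \notin F$. This yields the transversal description $G = \{z_1, \ldots, z_n\}$ with $z_i \in \{x_i, y_i\}$ claimed in (1).

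For (2), I would use (1) to establish the following characterization of faces of $\Delta_\chi$ (valid for any $\Delta$, not just independence complexes): a subset $W \subseteq V \cup \{y_1, \ldots, y_n\}$ is a face of $\Delta_\chi$ if and only if $|W \cap \{x_i, y_i\}| \le 1$ for every $i$ and $W \cap V$ is a face of $\Delta$. The ``only if'' direction is immediate from (1), since any face is contained in a transversal facet whose $x$-part lies in $\Delta$. For ``if'', given such a $W$, set $F = W \cap V \in \Delta$ and define $z_i = x_i$ if $x_i \in F$ and $z_i = y_i$ otherwise; then $\{z_1, \ldots, z_n\} = F \cup Y_F$ is a facet of $\Delta_\chi$ containing $W$, because $y_i \in W$ forces $x_i \notin W$, hence $x_i \notin F$, hence $z_i = y_i$.

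From this characterization, the minimal non-faces of $\Delta_\chi$ are exactly the pairs $\{x_i, y_i\}$ together with the minimal non-faces of $\Delta$ (viewed as subsets of $V$), so $\N(\Delta_\chi) = \N(\Delta) + (x_1 y_1, \ldots, x_n y_n)$. Assuming $\Delta = \Ind(\Gamma)$ gives $\N(\Delta) = \F(\Gamma)$, and therefore $\N(\Delta_\chi) = \F(\Gamma) + (x_1 y_1, \ldots, x_n y_n) = \F(w(\Gamma))$, which identifies $\Delta_\chi$ with $\Ind(w(\Gamma))$. I do not anticipate a real obstacle; the only subtlety is verifying that, after the purity/size count, \emph{every} generator $F \cup Y_F$ is already a facet (so non-facet faces of $\Delta$ do not create extra facets of $\Delta_\chi$ that break the transversal structure), which is what lets the ``only if'' half of the characterization go through cleanly.
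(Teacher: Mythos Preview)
Your proposal is correct and follows essentially the same approach as the paper: both arguments use the purity and dimension count from \cref{thm.whiskeredcomplexprop} to establish the transversal form of the facets in (1), and then determine the minimal nonfaces of $\Delta_\chi$ from this structure to obtain $\N(\Delta_\chi) = \N(\Delta) + (x_1y_1,\ldots,x_ny_n)$ in (2). Your intermediate characterization of all faces of $\Delta_\chi$ makes explicit a step the paper leaves to the reader (namely, why a minimal nonface containing some $y_j$ must in fact be one of the pairs $\{x_j,y_j\}$), but this is a matter of presentation rather than a genuinely different route.
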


\begin{proof}
Suppose $G=F\cup Y_F$ is a facet of $\Delta_\chi$ for some $F \in \Delta$. Then $Y_F=\{y_j \st 
 x_j \notin F \}$ and so we cannot have
both $x_i$ and $y_i$ in $G$.
By \cref{thm.whiskeredcomplexprop} $\Delta_{\chi}$ is a pure simplicial complex of dimension $n-1$, so $|G|=n$. Hence, for each $i$ either $x_i \in G$ or $y_i \in G$, but not both. This settles the first statement.

 For the second statement, note that if $F=\emptyset \in \Delta$, then $F \cup Y_F=\{y_1,\ldots,y_n\} \in \Delta_\chi$. Therefore, every nonface of $\Delta_\chi$ must contain at least one vertex $x_i$.  Part~(1) shows that for each $i \in \{1,\ldots,n\}$, $x_iy_i$ is a minimal nonface of $\Delta_\chi$. The only remaining minimal nonfaces are those containing only $x_i$'s, and those are exactly the minimal nonfaces of $\Delta$. Statement~(2) now follows from \eqref{e:connectfacetSR}.
 \end{proof}

\begin{example}\label{e:run2}
The independence complex of $\Gamma =\langle \{x_1,x_4\},\{x_2,x_4\},\{x_3\}\rangle$ is the simplicial complex
$\Delta$  in \cref{e:run1}. The $4$-colouring
$\chi = \{x_1\} \cup \{x_2\} \cup \{x_3\} \cup 
\{x_4\}$, produces the simplicial complex $\Delta_\chi$ on vertex set
$\{x_1,x_2, x_3, x_4,y_1,y_2, y_3,y_4\}$  with facets
$$\begin{array}{ccccc}
 \{y_1,y_2,y_3,y_4\} &
\{x_1,y_2,y_3,y_4\} & \{y_1,x_2,y_3,y_4\} & 
\{y_1,y_2,x_3,y_4\} & \{y_1,y_2,y_3,x_4\} 
\\
\{x_1,x_2,y_3,y_4\} &
\{x_1,y_2,x_3,y_4\} &  \{y_1,x_2,x_3,y_4\} & 
\{y_1,y_2,x_3,x_4\} &  \{x_1,x_2,x_3,y_4\} 
\end{array}$$
which is the independence complex of a whiskering of the  complex $\Gamma$,  drawn below on the left.
On the other hand, when $\chi$ is the $3$-colouring in \cref{e:run1}, then
$$\N(\Delta_\chi)=(x_1x_4,\, x_2x_4,\, x_1y_1,\, x_4y_1,\, x_2y_2,\, x_3y_3)$$ 
and $\Delta_\chi$ is not 
the independence complex of a grafted complex, as can be seen on the right.

\begin{center}
\begin{tabular}{cc}
\begin{tikzpicture}[scale=0.5]
    \coordinate (X1) at (-1,-1);
    \coordinate (X3) at (5,-1);
    \coordinate (X4) at (1,-1);
    \coordinate (X2) at (3,-1);
    \coordinate (Y1) at (-1,1);
    \coordinate (Y3) at (5,1);
    \coordinate (Y4) at (1,1);
    \coordinate (Y2) at (3,1);
  \draw (X1) -- (X4);
  \draw (X2) -- (X4);   
  \draw (X1) -- (Y1); 
  \draw (X2) -- (Y2); 
  \draw (X3) -- (Y3); 
  \draw (X4) -- (Y4); 
\fill[fill=white,draw=black] (X1) circle (.1);
 \node[below] at (X1) {$x_1$};
 \fill[fill=white,draw=black] (X2) circle (.1);
 \node[below] at (X2) {$x_2$};
 \fill[fill=white,draw=black] (X3) circle (.1);
 \node[below] at (X3) {$x_3$};
\fill[fill=white,draw=black] (X4) circle (.1);
\node[below] at (X4) {$x_4$};
\fill[fill=white,draw=black] (Y1) circle (.1);
 \node[above] at (Y1) {$y_1$};
\fill[fill=white,draw=black] (Y2) circle (.1);
 \node[above] at (Y2) {$y_2$};
\fill[fill=white,draw=black] (Y3) circle (.1);
 \node[above] at (Y3) {$y_3$};
\fill[fill=white,draw=black] (Y4) circle (.1);
 \node[above] at (Y4) {$y_4$}; 
\end{tikzpicture}
$\qquad$&$\qquad$
\begin{tikzpicture}[scale=0.5]
    \coordinate (X1) at (-1,-1);
    \coordinate (X3) at (5,-1);
    \coordinate (X4) at (1,-1);
    \coordinate (X2) at (3,-1);
    \coordinate (Y1) at (-1,1);
    \coordinate (Y3) at (5,1);
    \coordinate (Y2) at (3,1);
  \draw (X1) -- (X4);
  \draw (X2) -- (X4);   
  \draw (X1) -- (Y1); 
  \draw (X2) -- (Y2); 
  \draw (X3) -- (Y3); 
  \draw (X4) -- (Y1); 
\fill[fill=white,draw=black] (X1) circle (.1);
 \node[below] at (X1) {$x_1$};
 \fill[fill=white,draw=black] (X2) circle (.1);
 \node[below] at (X2) {$x_2$};
 \fill[fill=white,draw=black] (X3) circle (.1);
 \node[below] at (X3) {$x_3$};
\fill[fill=white,draw=black] (X4) circle (.1);
\node[below] at (X4) {$x_4$};
\fill[fill=white,draw=black] (Y1) circle (.1);
 \node[above] at (Y1) {$y_1$};
\fill[fill=white,draw=black] (Y2) circle (.1);
 \node[above] at (Y2) {$y_2$};
\fill[fill=white,draw=black] (Y3) circle (.1);
 \node[above] at (Y3) {$y_3$};
\end{tikzpicture}
\\
{\tiny $\chi=\{x_1\} \cup \{x_2\} \cup \{x_3\} \cup \{x_4\}$}
$\qquad$&$\qquad$
{\tiny $\chi=\{x_1,x_4\} \cup \{x_2\} \cup \{x_3\}$}\\
{\tiny grafted (whiskered)}
$\qquad$&$\qquad$
{\tiny not grafted}
\end{tabular}
\end{center}
\end{example}

To classify which colourings $\chi$ make
$\Delta_\chi$ a sphere or a ball, we first show that it would be necessary for $\chi$ to be an $n$-colouring. 

\begin{theorem}\label{pro.coloredwhiskering} \label{cor.coloredwhiskering} \label{t.pseudomanifolds+boundary}
   Let $\Delta$ be a simplicial complex on the vertex set $V = \{x_1, \dots, x_n\}$ and $\chi$ a colouring of $\Delta$. The following are equivalent:
  \begin{enumerate}
      \item $\Delta_{\chi}$ is a pseudomanifold;
      \item $\chi$ is the $n$-colouring $V = \{x_1\} \cup \dots \cup \{x_n\}$;
      \item $\Delta_\chi$ is the independence complex of a whiskered complex;
      \item $\Delta_\chi$ is the independence complex of a grafted 
      complex;
      \item $\Delta_\chi$ is a simplicial sphere
  if $\Delta$ is a simplex, and a simplicial
  ball otherwise.  
   \end{enumerate}

\end{theorem}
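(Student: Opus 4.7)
The plan is to establish the cycle $(1)\Rightarrow(2)\Rightarrow(3)\Rightarrow(4)\Rightarrow(1)$, and separately tie in condition $(5)$ through $(2)\Rightarrow(5)\Rightarrow(1)$. The only implication requiring genuinely new combinatorial input is $(1)\Rightarrow(2)$; the rest reassembles results already available earlier in the excerpt.

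For the main obstacle $(1)\Rightarrow(2)$, I argue by contrapositive. Suppose $\chi$ is not the $n$-colouring, so some colour class $V_j$ contains two distinct vertices $u$ and $v$. Since $\emptyset\in\Delta$, the set $Y_\emptyset=\{y_1,\ldots,y_s\}$ is a face of $\Delta_\chi$, and since $\Delta_\chi$ is pure of dimension $s-1$ by \cref{thm.whiskeredcomplexprop}(1), it is in fact a facet. The singletons $\{u\},\{v\}$ are also faces of $\Delta$; because both meet $V_j$ only, we get $Y_{\{u\}}=Y_{\{v\}}=\{y_i\st i\neq j\}$, yielding two further distinct facets $\{u\}\cup\{y_i\st i\neq j\}$ and $\{v\}\cup\{y_i\st i\neq j\}$. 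All three facets contain the face $R=\{y_i\st i\neq j\}$ of dimension $s-2$. Thus $R$ is a codimension-one face lying in at least three facets, contradicting the pseudomanifold property.

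For the remaining links of the cycle, $(2)\Rightarrow(3)$ follows from \cref{cor.facetsncolouring} after writing $\Delta=\Ind(\Gamma)$ for $\Gamma=\F(\N(\Delta))$, which is possible by the correspondences \eqref{e:connectfacetSR}; that proposition then identifies $\Delta_\chi$ as the independence complex of the whiskering of $\Gamma$. Next $(3)\Rightarrow(4)$ is immediate from \cref{d:grafting}, since whiskering is a special case of grafting. Closing the cycle, $(4)\Rightarrow(1)$ is provided by \cref{thm.mainthm-new}, which shows that the independence complex of a grafted complex is a (generalized Bier) simplicial ball or sphere, and both are pseudomanifolds by the implications displayed in \eqref{implications}.

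Finally, $(5)\Rightarrow(1)$ is immediate since balls and spheres are pseudomanifolds. To establish $(2)\Rightarrow(5)$, I use \cref{cor.facetsncolouring} to compute $\N(\Delta_\chi)=\N(\Delta)+(x_1y_1,\ldots,x_ny_n)$, which is exactly the polarization of the artinian monomial ideal $J=\N(\Delta)+(x_1^2,\ldots,x_n^2)$. By the ball-versus-sphere criterion in \cref{thm.mainthm-new}, $\Delta_\chi$ is a sphere if and only if $J$ is generated solely by pure variable powers, equivalently $\N(\Delta)\subseteq(x_1^2,\ldots,x_n^2)$. Since $\N(\Delta)$ is squarefree and no nonzero squarefree monomial is divisible by any $x_i^2$, this containment forces $\N(\Delta)=(0)$, i.e., $\Delta$ is the full simplex on $V$; in every other case $\Delta_\chi$ is a simplicial ball, completing $(2)\Rightarrow(5)$.
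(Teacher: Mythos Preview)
Your proof is correct and follows essentially the same approach as the paper's. The contrapositive argument for $(1)\Rightarrow(2)$, the chain $(2)\Rightarrow(3)\Rightarrow(4)\Rightarrow(1)$ via \cref{cor.facetsncolouring}, the definition of grafting, and \cref{thm.mainthm-new} all match; your treatment of $(2)\Rightarrow(5)$ via the explicit polarization $\N(\Delta)+(x_1^2,\ldots,x_n^2)$ is just a slightly more detailed rendering of what the paper does by invoking \cref{thm.mainthm-new} on the whiskering of $\Gamma=\F(\N(\Delta))$.
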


\begin{proof}
 $(2) \Rightarrow (1)$
 We first suppose that the $s$-colouring
 $\chi$ is $V = \{x_1\} \cup \cdots \cup 
  \{x_n\}$.  Then $\Delta_\chi$ is 
  Cohen-Macaulay of dimension $n-1$ by   \cref{thm.whiskeredcomplexprop}, and thus $\Delta_\chi$
  strongly-connected (see~\eqref{implications}).
  Consider any face $F \in \Delta_\chi$ of dimension
  $n-2$.   Suppose that $G \in \Delta_\chi$ is a facet
  that contains $F$.  By \cref{cor.facetsncolouring},
  $G = \{z_1,z_2,\ldots,z_n\}$ with $z_i \in \{x_i,y_i\}$.
  Thus, there is some $j \in \{1\,\ldots,n\}$ such that
  $F = G \setminus \{z_j\}$. So, the only facets of $\Delta_\chi$
  that can contain $F$ are $F \cup \{x_j\}$ or $F \cup \{y_j\}$.
  Thus, each $n-2$ dimensional face of $\Delta_\chi$ is a
  ridge.  Consequently, $\Delta_\chi$ is a pseudomanifold.

  $(1) \Rightarrow (2)$ We prove the contrapositve
  statement. So suppose
  that $\chi$ is an $s$-colouring of
  $\Delta$ such that the associated
  partition $V = V_1 \cup \cdots \cup V_s$
  has at least one $V_i$ with $|V_i| \geq 2$.  
  After relabelling, we can
  assume $|V_1| \geq 2$, and $u,v \in V_1$ are two
  distinct vertices in $V_1$.   Let $\{y_1,\ldots,y_s\}$
  denote the new vertices of $\Delta_\chi$.  
  Then $\{y_1,\ldots,y_s\}$ is a facet of $\Delta_\chi$ by
  \cref{defn.generalwhisker} when
  we take $F = \emptyset$.  So, $\{y_2,\ldots,y_s\}$ 
  is a $(s-2)$-dimensional face that is contained in three
  facets, namely, $\{y_1,y_2,\ldots,y_s\}$, $\{u,y_2,
  \ldots,y_s\}$ and $\{v,y_2,\ldots,y_s\}$.  Thus,
  $\Delta_\chi$ cannot be a pseudomanifold since this face
  is not a ridge.

  For the other implications, $(2) \Rightarrow (3)$ follows from~\cref{lem:coloredwhisker}, $(3) \Rightarrow (4)$ follows by definition and $(4) \Rightarrow (1)$ follows from \cref{thm.mainthm-new}.
  
    For~(5), since simplicial spheres and balls are pseudomanifolds (see \eqref{implications}), we can assume $\chi$ 
  is the colouring $\{x_1\} \cup \cdots
  \cup \{x_n\}$ and, by what we just proved and \cref{lem:coloredwhisker}~(2), $\Delta_\chi$ is the independence complex of a whiskering of  $\Gamma$ where $\Ind(\Gamma)=\Delta$. By \cref{thm.mainthm-new}, $\Delta_\chi$ is a simplicial ball, unless $\Gamma=\emptyset$, or equivalently, $\Delta$ is the full simplex.
  \end{proof}

\begin{example} With the $3$-colouring $\chi$ in \cref{e:run1}, in light of 
\cref{pro.coloredwhiskering},  $\Delta_\chi$ is not 
a pseudomanifold, and thus not a sphere, since the one-dimensional
face $\{y_2,y_3\}$ appears in three facets.
However, $\Delta_\chi$ is Cohen--Macaulay
by \Cref{thm.whiskeredcomplexprop}.
On the other hand with the $4$-colouring of the same simplicial complex $\Delta$ in \cref{e:run2},  we obtain a complex $\Delta_\chi$ which is a ball since it is the independence complex of a whiskered simplicial complex that is not a simplex. 
\end{example}

\section{Simplicial balls and spheres as flag complexes of very well-covered graphs}\label{s:verywellcovered}

The implications in \eqref{implications}  summarized
how  one property of a simplicial complex leads to another one.
While the reverse implications fail to hold in general, in this
section we show that all the implications  (except
the pure property) can be reversed for independence
complexes of very well-covered graphs.  Some of these reverse implications were discovered earlier in~\cites{CRT2011, MMCRTY2011}. Our main contribution is to show that
the independence complex of a Cohen-Macaulay very well-covered graph with no isolated vertices is always a simplicial ball or sphere.
As a by-product of our work, we conclude that assuming the size of a facet of a pure flag complex is half the number of vertices -- a restriction often considered in graph theory -- is enough to make several interesting properties, such as being Cohen-Macaulay and being a simplicial ball be equivalent.

We begin by recalling some relevant graph
theory.
Let $G = (V,E)$ be a graph with vertex 
set $V$
and edge set $E$.  We sometimes
write $V(G)$ or $E(G)$ to highlight the vertex set
or edge set belonging to $G$.
If $W \subseteq V$, then the {\bf induced graph of $G$ on 
$W$} is the graph $G_W = (W,E_W)$ where 
$E_W = \{e \in E \st e \subseteq W\}$.  In other
words, $G_W$ is the subgraph of $G$ where an
edge $e$ of $G$ is an edge of $G_W$ if both
endpoints of $e$ belong to $W$.  A {\bf cycle} on
$n$ vertices, denoted $C_n$, 
is the graph with vertex set
$V =\{x_1,\ldots,x_n\}$ and edge set
$E = \{\{x_i,x_{i+1}\} \st i = 1,\ldots,n-1\} \cup
\{\{x_n,x_1\}\}$.  A {\bf clique} 
on $n$ vertices, denoted $K_n$, is 
the graph with vertices $V = \{x_1,\ldots,x_n\}$
and edge set $\{\{x_i,x_j\} \st 1 \leq i < j \leq n\}$.
In particular, $K_1$ denotes a vertex.
A subset $W \subseteq V$ is an {\bf independent set} 
of $G$ if
for all $e \in E$, $e \not\subseteq W$. 
{\bf Maximal independent sets} are independent
sets that are maximal with respect to inclusion.
A vertex cover of $G$ is a notion dual 
to an independent set.
Specifically, a subset $Y \subseteq V$ is 
a {\bf vertex cover} of $G$ if for all $e \in E$,
$e \cap Y \neq \emptyset$.   A vertex cover $Y$ 
is {\bf minimal} if
$Y$ is a vertex cover, but $Y \setminus\{y\}$ 
is not a vertex cover
for all $y \in Y$.   It is not hard to 
verify that $W \subseteq V$
is an (maximal) independent set of $G$ 
if and only if $V\setminus W$ is a (minimal) vertex
cover of $G$.  
A graph $G$ is {\bf unmixed} or {\bf well-covered} if all
of its maximal independent sets have 
the same cardinality.  Note this is equivalent to the
statement that all the minimal vertex covers have the
same size.

As noted earlier, the 
{\bf edge ideal} $I(G)$ of a graph $G$ is the 
facet ideal of $G$ when considered as a 
$1$-dimensional simplicial complex. 
Similarly, the well-known {\bf independence complex} of $G$ 
is the complex $\Ind(G)$ as defined earlier for simplicial complexes. In particular, the facets
of $\Ind(G)$ correspond to maximal independent sets,
and $\Ind(G)$ is pure if and only if 
$G$ is well-covered.

A graph $G$ is {\bf very well-covered} if
every maximal independent set has cardinality $|V|/2$.  Note that this
forces $|V|$ to be even.  Very well-covered graphs, which were first defined \cite{F1982},
have appeared often within
the combinatorial commutative algebra literature
(see the survey \cite{KPSFTY2022} and the references therein).
Whiskered graphs are examples of very well-covered graphs,
as we now show.

\begin{example}\label{ex.whisker-verywellcovered}
 Let $G$ be any graph with vertex set
 $V(G) = \{x_1,\ldots,x_d\}$
 and let $H = w(G)$ be the whiskered graph with
 $V(H) = \{x_1,\ldots,x_d,y_1,\ldots,y_d\}$. We
 verify that $H$ is very well-covered by showing
 that every maximal independent set has $d = |V(H)|/2$
 elements. Let $W$ be an independent set of $H$. First note that as
 $\{x_i,y_i\}$ is an edge of $H$, at most one of $x_i$ and $y_i$ can be in $W$ for each $i=1,\ldots,d$, and thus 
 $|W| \leq d$.  If $|W| <d$,
 then there is an $i$ such that $x_i, y_i \not\in W$, and $W \cup \{y_i\}$ is an independent set of $H$
 since $y_i$ is only adjacent to $x_i$. So if $W$ is maximal then $|W|=d$,
 and thus any whiskered graph is very well-covered.
 \end{example}

The converse, however, is not true. 
The graph
$$\begin{tikzpicture}[scale=.5]
\coordinate  (y3) at (0,0);
\coordinate  (y4) at (4,0);
\coordinate  (y5) at (8,0);
\coordinate  (y6) at (12,0);
\coordinate  (x3) at (0,2);
\coordinate  (x4) at (4,2);
\coordinate  (x5) at (8,2);
\coordinate  (x6) at (12,2);
\coordinate  (x1) at (4,4);
\coordinate  (x2) at (8,4);
\coordinate  (y1) at (0,4);
\coordinate  (y2) at (12,4);
\draw (y3) -- (x3);
\draw (y4) -- (x4);
\draw (y5) -- (x5);
\draw (y6) -- (x6);
\draw (y4) -- (x3);
\draw (y5) -- (x3);
\draw (y6) -- (x3);
\draw (y5) -- (x4);
\draw (y6) -- (x4);
\draw (y6) -- (x5);
\draw (x1) -- (x2);
\draw (x1) -- (x3);
\draw (x1) -- (x4);
\draw (x1) -- (x5);
\draw (x1) -- (x6);
\draw (x2) -- (x3);
\draw (x2) -- (x4);
\draw (x2) -- (x5);
\draw (x2) -- (x6);
\draw (y1) -- (x1);
\draw (x2)-- (y2);
\foreach \point in {x1,x2,x3,x4,x5,x6,y1,y2,y3,y4,y5,y6} \fill[fill=white,draw=black] (\point) circle (.1);
\node[below] at (y3) {${ y_3}$};
\node[below] at (y4) {$y_4$};
\node[below] at (y5) {$ y_5$};
\node[below] at (y6) {$ y_6$};
\node[left]  at (x3) {$ x_3$};
\node[left]  at (x4) {$ x_4$};
\node[left]  at (x5) {$ x_5$};
\node[right] at (x6) {$ x_6$};
\node[above] at (x1) {$ x_1$};
\node[above] at (x2) {$ x_2$};
\node[above] at (y1) {$ y_1$};
\node[above] at (y2) {$ y_2$};
\end{tikzpicture}
$$
is Cohen-Macaulay and very well-covered by~\cite{CRT2011} (see \Cref{thm.verywellcovered-label} below), but not whiskered.

 As indicated earlier, some of the reverse implications 
 of \eqref{implications} are known to hold for the
 independence complexes of very well-covered graphs.  We 
recall the specific statements below.

\begin{theorem}[{\cite[Theorem 0.2]{CRT2011}},
{\cite[Theorem 1.1]{MMCRTY2011}}, {\cite[Theorem~2.3]{CV}}]
\label{thm.verywellcoveredclass} 
Let $G$ be a very well-covered graph without
isolated vertices.  Then the following 
conditions are equivalent:
\begin{enumerate}
    \item $\Ind(G)$ is strongly-connected;
    \item $I(G)$ is Cohen-Macaulay;
    \item $\Ind(G)$ is shellable; 
    \item $\Ind(G)$ is vertex-decomposable.
\end{enumerate}
\end{theorem}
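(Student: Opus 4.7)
The plan is to close the implication cycle. From the chain of implications summarized in~\eqref{implications}, the chain $(4) \Rightarrow (3) \Rightarrow (2) \Rightarrow (1)$ is automatic, so the theorem reduces to the hardest direction $(1) \Rightarrow (4)$: under the very well-covered hypothesis without isolated vertices, strong-connectivity of $\Ind(G)$ already forces $\Ind(G)$ to be vertex decomposable.

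The backbone of the argument would be Favaron's~\cite{F1982} structural characterization. A very well-covered graph with no isolated vertices has $|V(G)| = 2d$ and admits a perfect matching
$$M = \{\{x_1,y_1\}, \ldots, \{x_d,y_d\}\}$$
together with a labelling of the pairs satisfying transitivity axioms which, in particular, produce a distinguished $y_i$ of very restricted neighbourhood (e.g.\ behaving as a ``leaf-partner'' of $x_i$). I would induct on $d$; the base case $d=1$ is a single edge whose independence complex is two points, trivially vertex decomposable. For the inductive step I would take such a distinguished $y_i$ and first verify it is a shedding vertex: if $F$ were a facet of $\del_{\Ind(G)}(\{y_i\})$ that is not a facet of $\Ind(G)$, then $F$ would be a maximal independent set of $G$ of size less than $d$, contradicting very well-coveredness. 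Then I would use the identifications
\begin{align*}
\lk_{\Ind(G)}(\{y_i\}) &= \Ind\bigl(G \setminus N_G[y_i]\bigr), \\
\del_{\Ind(G)}(\{y_i\}) &= \Ind\bigl(G \setminus \{y_i\}\bigr),
\end{align*}
noting that every maximal independent set of $G$ meets the pair $\{x_i,y_i\}$ in exactly one vertex, so every facet of $\del_{\Ind(G)}(\{y_i\})$ contains $x_i$. Consequently $\del_{\Ind(G)}(\{y_i\})$ is a cone over $x_i$ whose base is the independence complex of a smaller very well-covered graph on $2(d-1)$ vertices, while the Favaron labelling restricts cleanly to $G \setminus N_G[y_i]$ to give another smaller very well-covered graph with no isolated vertices.

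The main obstacle will be propagating the strong-connectivity hypothesis down the induction: strong-connectivity of $\Ind(G)$ is not automatically inherited by the independence complexes of arbitrary induced subgraphs, and establishing this inheritance (or replacing the hypothesis with a combinatorial condition which transparently descends, such as a uniform labelling of pairs) is the delicate point. This is where the three prior proofs~\cite{CRT2011,MMCRTY2011,CV} diverge in technique; I would pursue the direct vertex-decomposition route, because by~\eqref{implications} it simultaneously delivers the intermediate conditions $(2)$ and $(3)$ and thereby closes all four equivalences in one stroke.
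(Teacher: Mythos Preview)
The paper does not prove this theorem; it is cited from \cite{CRT2011}, \cite{MMCRTY2011}, and \cite{CV} and invoked as a known result, so there is no proof in the paper to compare your proposal against.

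Regarding the sketch itself: the outline is the right shape, and you have honestly flagged the crux (descent of the strong-connectivity hypothesis to the link and deletion). One local error is worth noting, however: your shedding-vertex verification is not correct as stated. A facet $F$ of $\del_{\Ind(G)}(\{y_i\})$ that is not a facet of $\Ind(G)$ is \emph{not} a maximal independent set of $G$ at all---by maximality in the deletion it must extend by $y_i$---so very well-coveredness is not contradicted by its size. Whether $y_i$ is actually a shedding vertex depends on which matched pair is chosen, and for a generic $i$ it can fail; pinning down the correct $i$ uses the ordered labelling of \cref{thm.verywellcovered-label}, which in the cited literature is established only \emph{after} proving $(1)\Rightarrow(2)$. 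Indeed, the references split the work---\cite{CRT2011} handles $(1)\Leftrightarrow(2)$ and \cite{MMCRTY2011} handles $(2)\Rightarrow(4)$---rather than attempting a direct $(1)\Rightarrow(4)$ jump.
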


The main result of this section
 is to show that we can add the conditions that $\Ind(G)$
is a pseudomanifold and that $\Ind(G)$ is a simplicial ball or sphere to the list of equivalent conditions
of \cref{thm.verywellcoveredclass}.  
To prove this result, we need
a graph-theoretic classification of the very
well-covered graphs with the property that $I(G)$ is Cohen-Macaulay. 

\begin{theorem}[\cite{CRT2011}, see also {\cite[Lemma 3.1]{MMCRTY2011}}]\label{thm.verywellcovered-label}
Let $G = (V,E)$ be a very well-covered graph
with $2d$ vertices.  Then $I(G)$ is Cohen-Macaulay 
if and only  if there is a labelling of the vertices 
$V = \{x_1,\ldots,x_d,y_1,\ldots,y_d\}$ such that
\begin{enumerate}
    \item $\{x_1,\ldots,x_d\}$ is a minimal vertex cover of 
  $G$ and $\{y_1,\ldots,y_d\}$ is a maximal independent 
  set of $G$;
    \item $\{x_i,y_i\}$ are edges of $G$ for $i=1,\ldots,d$;
    \item if $\{z_i,x_j\}$ and $\{y_j,x_k\}$ are edges of 
  $G$ for distinct $i,j,k$, then $\{z_i,x_k\} \in E$ for 
  $z_i \in \{x_i,y_i\}$;
    \item if $\{x_i,y_j\}$ is an edge of $G$, then 
  $\{x_i,x_j\} \not\in E$; and 
    \item if $\{x_i,y_j\}$ is an edge of $G$, then 
    $i\leq j$.
\end{enumerate}
\end{theorem}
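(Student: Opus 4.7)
The plan is to prove both directions of the characterisation separately. For sufficiency of conditions (1)--(5), the approach is induction on $d$, showing $\Ind(G)$ is vertex decomposable via the vertex $y_1$ (which by the implications in \eqref{implications} suffices for Cohen-Macaulayness). Condition (5) applied with $j=1$ forces $i \leq 1$, hence $\{x_i,y_1\} \in E$ only when $i=1$; together with the fact that $\{y_1,\ldots,y_d\}$ is independent, one concludes that $y_1$ is adjacent only to $x_1$ in $G$. Therefore
\[
\lk_{\Ind(G)}(\{y_1\}) = \Ind(G\setminus\{x_1,y_1\}) \qand \del_{\Ind(G)}(\{y_1\}) = \Ind(G\setminus\{y_1\}),
\]
and both inherit a labelling satisfying (1)--(5) after deleting the first pair, closing the induction (after a routine check that every facet of the deletion is a facet of $\Ind(G)$, using that $y_1$ has only the neighbour $x_1$).

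For necessity, one first produces the basic pairing. Fix a maximal independent set $\{y_1,\ldots,y_d\}$; by very well-coveredness, its complement $\{x_1,\ldots,x_d\}$ is a minimal vertex cover of the same size. Minimality forces each $x_i$ to have a \emph{private} neighbour among the $y$'s (one that is only covered by $x_i$), which via Hall's theorem yields a perfect matching between the two sides; relabelling indices so that $\{x_i,y_i\} \in E$ establishes conditions (1) and (2). Condition (3) is then derived from Cohen-Macaulayness via Reisner's criterion: if (3) fails for indices $i,j,k$, the link in $\Ind(G)$ of the face $\{y_\ell \st \ell \notin \{i,j,k\}\}$ is (up to join with the simplex on the remaining $y_\ell$'s) the independence complex of the induced subgraph on $\{x_i,y_i,x_j,y_j,x_k,y_k\}$, and a direct check shows this independence complex has non-vanishing reduced homology below its top dimension, contradicting Cohen-Macaulayness. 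Condition (4) follows from a similar but simpler link argument, noting that $\{x_i,y_j\} \in E$ and $\{x_i,x_j\} \in E$ would allow constructing an independent set of size greater than $d$, contradicting very well-coveredness.

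The main obstacle is achieving condition (5) consistently with (1)--(4): the roles of $x_i$ and $y_i$ within a pair are not canonically distinguished (since removing $y_i$ from a maximal independent set and swapping in $x_i$ typically gives another choice), and the linear order on pairs required by (5) depends on which vertex plays which role. The plan is to define a relation on the pair indices by $i \to j$ whenever $\{x_i,y_j\} \in E$, use condition (3) to establish transitivity-like properties, and then use condition (4) to rule out antisymmetry violations after possibly swapping $x_i \leftrightarrow y_i$ for certain indices. A linear extension of the resulting partial order then yields the indexing required by (5). The delicate point will be verifying that a globally consistent set of swaps exists; I expect this to require one more appeal to Cohen-Macaulayness (via Reisner on a carefully chosen link) in order to rule out obstructing cycles in the pair-adjacency relation.
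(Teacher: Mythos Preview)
The paper does not prove this theorem; it is stated as a citation from \cite{CRT2011} and \cite{MMCRTY2011}, so there is no in-paper argument to compare against. That said, your proposed argument has a genuine gap in the sufficiency direction.

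Your analysis that $y_1$ is adjacent only to $x_1$ is correct, and the identification $\lk_{\Ind(G)}(\{y_1\})=\Ind(G\setminus\{x_1,y_1\})$ is fine. The problem is the deletion. For $y_1$ to be a shedding vertex (in the sense of \cref{def.stronglyconnect}), every facet of $\del_{\Ind(G)}(\{y_1\})=\Ind(G\setminus\{y_1\})$ must be a facet of $\Ind(G)$. This fails as soon as $x_1$ has a neighbour other than $y_1$. Concretely, take $d=2$ with edges $\{x_1,y_1\},\{x_2,y_2\},\{x_1,y_2\}$; conditions (1)--(5) hold, yet in $G\setminus\{y_1\}$ the set $\{y_2\}$ is a maximal independent set, hence a facet of the deletion, while it is not a facet of $\Ind(G)$. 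So $y_1$ is not a shedding vertex and your induction does not close. The fix used in the literature is, roughly, to shed $x_1$ rather than $y_1$: since $y_1$ becomes isolated in $G\setminus\{x_1\}$, every maximal independent set there contains $y_1$ and is already maximal in $G$, so the shedding condition holds; the deletion is then a cone $\{y_1\}\ast\Ind(G\setminus\{x_1,y_1\})$, and the substantive work shifts to controlling the link $\Ind(G\setminus N[x_1])$.

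On the necessity side, your outline for producing the matching and for deriving (3) via Reisner on a suitable link is in the right spirit. The justification you sketch for (4), however, is backwards: adding the edge $\{x_i,x_j\}$ cannot create a larger independent set. Condition (4) (and the consistency needed for (5)) is instead obtained from unmixedness/very well-coveredness by exhibiting a maximal independent set of the wrong size when (4) fails, or again via a link homology computation; you will need to rework that step.
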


We can use \cref{thm.verywellcovered-label} 
to describe the
facets of $\Ind(G)$ when $G$ is a very
well-covered graph and $I(G)$ is Cohen-Macaulay.

\begin{corollary}\label{cor.verywellcovered-facets}
  Let $G$ be a very well-covered graph, and suppose
  that $I(G)$ is Cohen-Macaulay.  
  Let $V = \{x_1,\ldots,x_d,y_1,\ldots, y_d\}$ be the 
  labelling of the vertices of $G$ as given in \cref{thm.verywellcovered-label}. Then every 
  facet $F \in \Ind(G)$ has the form
  $$F = \{z_1,z_2,\ldots,z_d\}~~
  \mbox{where $z_i \in \{x_i,y_i\}$ for
  $i =1,\ldots,d$.}$$
\end{corollary}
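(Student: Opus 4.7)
The plan is to observe that the vertex set decomposes into the $d$ disjoint pairs $\{x_i,y_i\}$ (for $i=1,\ldots,d$) and to use a simple counting argument combined with the edge structure guaranteed by \cref{thm.verywellcovered-label}.

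First I would recall that since $G$ is very well-covered on $2d$ vertices, every maximal independent set of $G$ has cardinality exactly $d$; equivalently, every facet of $\Ind(G)$ has exactly $d$ vertices. Next, I would invoke condition (2) of \cref{thm.verywellcovered-label}, which guarantees that $\{x_i,y_i\} \in E(G)$ for every $i=1,\ldots,d$. Consequently, any independent set $F$ of $G$ contains at most one vertex from each of the $d$ pairs $\{x_i,y_i\}$.

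From here the conclusion is immediate: because $V = \{x_1,\ldots,x_d,y_1,\ldots,y_d\}$ is partitioned into the $d$ pairs $\{x_i,y_i\}$, the previous observation forces $|F| \leq d$ for every independent set. When $F$ is a facet we have $|F| = d$, so by the pigeonhole principle $F$ must contain exactly one vertex $z_i \in \{x_i,y_i\}$ for each $i=1,\ldots,d$, giving the required form $F = \{z_1,\ldots,z_d\}$.

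There is no real obstacle here; the statement is essentially a structural consequence of the labelling produced in \cref{thm.verywellcovered-label}, together with the definition of a very well-covered graph. The only point to double-check is that the facet sizes match the number of pairs, which follows from $|V(G)| = 2d$ and the very well-covered hypothesis.
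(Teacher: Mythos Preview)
Your proposal is correct and mirrors the paper's own proof essentially line for line: the paper uses condition~(2) of \cref{thm.verywellcovered-label} to get $|F \cap \{x_i,y_i\}| \leq 1$, then invokes the very well-covered hypothesis to force $|F|=d$, and concludes that each intersection has size exactly one. Your pigeonhole phrasing is just a restatement of this counting argument.
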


\begin{proof}
  Let $F \in \Ind(G)$ be a facet.  Since $\{x_i,y_i\}$
  is an edge of $G$ by \cref{thm.verywellcovered-label}~(2),  
  $|F \cap \{x_i,y_i\}| \leq 1$ for $i=1,\ldots, d$ because $F$ is an independent set.
  But since $G$ is very well-covered, every
  maximal independent set, or equivalently, every
  facet, has $d$ elements.  So $|F \cap \{x_i,y_i\}| =1$
  for all $i$.
\end{proof}

We come to our new result about pseudomanifolds. 
Our next statement generalizes our earlier result \cite[Theorem 3.1]{CFHNVT2023} about the independence complexes of whiskered graphs.

\begin{theorem}
\label{thm.verywellcovered-pseudomanifold}
  Let $G$ be a very well-covered graph on $2d$ vertices where $d\geq 1$ and with no
  isolated vertices.  Then $\Ind(G)$ is a pseudomanifold
  if and only if $I(G)$ is Cohen-Macaulay. Furthermore, if $\Ind(G)$
  is a pseudomanifold, then $\Ind(G)$ has
  a boundary if and only
  if $G \neq dK_2$, that is, $G$ is not the graph of $d$ disjoint edges.
\end{theorem}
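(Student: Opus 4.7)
The plan is to establish both claims using \cref{thm.verywellcoveredclass}, \cref{thm.verywellcovered-label}, and \cref{cor.verywellcovered-facets}, which together pin down the facet structure of $\Ind(G)$ when $I(G)$ is Cohen-Macaulay.

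For the forward direction of the first equivalence, every pseudomanifold is strongly-connected, so \cref{thm.verywellcoveredclass} yields that $I(G)$ is Cohen-Macaulay. For the converse, assume $I(G)$ is Cohen-Macaulay; then \cref{thm.verywellcoveredclass} gives that $\Ind(G)$ is pure of dimension $d-1$ and strongly-connected, so only the ridge condition remains. Given any $(d-2)$-face $F$, choose a facet $H \supseteq F$; by \cref{cor.verywellcovered-facets} we have $H = \{z_1,\ldots,z_d\}$ with $z_i \in \{x_i,y_i\}$, and $F = H \setminus \{z_j\}$ for some $j$. Any other facet containing $F$ also has this form (again by \cref{cor.verywellcovered-facets}) and must agree with $H$ on every index $i \neq j$, so it equals either $F \cup \{x_j\}$ or $F \cup \{y_j\}$. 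Thus $F$ lies in at most two facets, and $\Ind(G)$ is a pseudomanifold.

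For the boundary statement, first assume $G = dK_2$. The only edges are $\{x_i,y_i\}$, so every set $\{z_1,\ldots,z_d\}$ with $z_i \in \{x_i,y_i\}$ is independent and hence a facet (by purity and size $d$). Any $(d-2)$-face $F$ is obtained by dropping one index $j$ from such a facet, and both $F \cup \{x_j\}$ and $F \cup \{y_j\}$ are facets, so every ridge lies in exactly two facets and $\Ind(G)$ has no boundary. Conversely, suppose $G \neq dK_2$, so $G$ contains an edge distinct from all $\{x_i,y_i\}$. By \cref{thm.verywellcovered-label} this extra edge must be $\{x_i,y_j\}$ or $\{x_i,x_j\}$ with $i \neq j$. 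In the first case, set $F = \{y_1,\ldots,y_d\} \setminus \{y_i\}$; then $F \cup \{y_i\}$ is a facet, while $F \cup \{x_i\}$ contains both $x_i$ and $y_j$ and so is not independent, so $F$ lies in exactly one facet. If instead no edge of the first type exists but $\{x_i,x_j\}$ is an edge, then $x_i$ is adjacent only to $y_i$ among the $y_k$'s, so $\{x_i\} \cup \{y_k : k \neq i\}$ is independent and thus a facet; the ridge $F = \{x_i\} \cup \{y_k : k \neq i,j\}$ admits $F \cup \{y_j\}$ as a facet, but $F \cup \{x_j\}$ contains the edge $\{x_i,x_j\}$ and is not independent. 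Either way a ridge sits in exactly one facet, so $\Ind(G)$ has a boundary.

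The main obstacle is the case analysis for the boundary direction: one has to tailor the choice of $(d-2)$-face to the shape of the extra edge so that exactly one of its two potential completions fails to be independent. Everything else reduces quickly to the cited structural results.
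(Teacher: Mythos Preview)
Your proof is correct and follows essentially the same approach as the paper: both directions of the first equivalence are handled identically, and the ridge argument via \cref{cor.verywellcovered-facets} matches the paper's. The only minor differences are in the boundary analysis: you handle $G=dK_2$ by a direct count (the paper instead invokes \cref{thm.mainthm-new} to identify $\Ind(dK_2)$ as a sphere), and your case split for $G\neq dK_2$ is on whether an edge $\{x_i,y_j\}$ with $i\neq j$ exists, whereas the paper splits on whether $\{x_1,\ldots,x_d\}$ is independent---in their second case they extend $\{y_i,x_j\}$ to a facet using condition~(5) of \cref{thm.verywellcovered-label}, while you build the facet $\{x_i\}\cup\{y_k:k\neq i\}$ explicitly from your stronger global hypothesis.
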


\begin{proof}
  One direction is clear  by 
  \cref{thm.verywellcoveredclass} and \eqref{implications}: if 
  $\Ind(G)$ is a pseudomanifold,  then $\Ind(G)$ is 
  strongly-connected, 
  and thus $I(G)$ is Cohen-Macaulay.
  
  For the converse, 
  let $G$ be a very well-covered graph with no
  isolated vertices such that $I(G)$
  is Cohen-Macaulay.  
  If $G=dK_2$, then $G$ is a whiskering of a graph with no edges, and so by \cref{thm.mainthm-new}  $\Ind(G)$ 
  is a simplicial sphere, and in particular a pseudomanifold with no boundary (see \eqref{implications}).
  
  Suppose  $G \neq dK_2$ and assume the
  vertices of $G$ have been labeled as in \cref{thm.verywellcovered-label}. Since $G$ is well-covered $\Ind(G)$ is pure of dimension $d-1$, and by \cref{thm.verywellcoveredclass}
  $\Ind(G)$ is strongly-connected.
  Let $H$ be
  any $(d-2)$-dimensional face. By \cref{cor.verywellcovered-facets} $H$ has the form
  $\{z_1,\ldots,\hat{z}_j,\ldots,z_d\}$ with $z_i \in \{x_i,y_i\}$ for all $i$.
  If $F$ is a facet
  that contains $H$, it must have the form
  $H \cup \{x_j\}$ or $H \cup \{y_j\}$.  That is, there are
  at most two facets that contain $H$. Thus 
  $\Ind(G)$ is a pseudomanifold.

  It remains to show the second statement about the boundary when $G \ne dK_s$. With the labeling
  of \cref{thm.verywellcovered-label}, we consider two cases.
  
  First suppose
 that $\{x_1,\ldots,x_d\}$ is an independent set.
 Since $G$ is not a collection
  of disjoint edges, there is pair  $i < j$ such that
  $\{x_i,y_j\}$ is an edge of $G$.  The set $H = 
  \{y_1,\ldots,\hat{y}_i,\ldots,y_d\}$ is a face of dimension $d-2$ of 
  $\Ind(G)$.  It follows from \cref{cor.verywellcovered-facets} that
  $H$ is contained in either the facet $H \cup \{y_i\}$
  or $H \cup \{x_i\}$.  But since $x_i$ is adjacent
  to $y_j \in H$, the only facet containing $H$ is $\{y_1,\ldots,y_d\} \in \Ind(G)$.

  Next, if $\{x_1,\ldots,x_d\}$ is not an independent
  set of $G$, then there is at least one edge $\{x_i,x_j\}$
  among these vertices.  Suppose that $i<j$.  Note
  that $\{y_i,x_j\}$ is not an edge in $G$ by \cref{thm.verywellcovered-label}~(5).
  Consequently, $\{y_i,x_j\}$ belongs to some
  facet
  $H$ of $G$ as every independent set belongs
  to a maximal independent set.
  Then $H \setminus \{y_i\}$ is a face of
  dimension $d-2$ that is contained in $H$. However,
  it is not contained in $(H \setminus \{y_i\}) 
  \cup  \{x_i\}$ since
  $x_i$ and $x_j$ are adjacent.   So $H \setminus \{y_i\}$
  is only contained in the facet $H$.

  These two cases now
  show that $\Ind(G)$ has a boundary.
  \end{proof}

We are now ready to summarize
our new contributions to the
properties of $\Ind(G)$ when $G$ is a
very well-covered graph. In particular,
all the reverse implications in \eqref{implications} that
do not involve the pure property hold 
for this class of simplicial complexes.

\begin{corollary}
\label{cor.verywellcoveredclass}
Let $d \geq 1$ and $G$ be a very well-covered graph on $2d$ vertices with no
  isolated vertices.
Then the following 
conditions are equivalent:
\begin{enumerate}
    \item $\Ind(G)$ is strongly connected;
    \item $I(G)$ is Cohen-Macaulay;
    \item $\Ind(G)$ is a pseudomanifold;
    \item $\Ind(G)$ is a simplicial ball or sphere.
\end{enumerate}
In particular, the following more refined statements are equivalent: 
\begin{itemize}
\item $G=dK_2$ for some $d \geq 1$;
\item $\Ind(G)$ is a pseudomanifold with no boundary;
\item $\Ind(G)$ is a homology sphere;
\item $\Ind(G)$ is a simplicial sphere.
\end{itemize}
\end{corollary}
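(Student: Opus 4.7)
The plan is to deduce both sets of equivalences by assembling the results already established in this section: \cref{thm.verywellcoveredclass}, \cref{thm.verywellcovered-pseudomanifold}, \cref{t:sphere-ball}, and \cref{thm.mainthm-new}, together with the general implications displayed in \eqref{implications}.

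For the first block of equivalences, $(1) \Leftrightarrow (2)$ is given by \cref{thm.verywellcoveredclass} and $(2) \Leftrightarrow (3)$ by \cref{thm.verywellcovered-pseudomanifold}, while $(4) \Rightarrow (3)$ is immediate from \eqref{implications}. The only remaining direction is $(2) \Rightarrow (4)$: under the hypothesis that $I(G)$ is Cohen-Macaulay, \cref{thm.verywellcoveredclass} supplies shellability of $\Ind(G)$ and \cref{thm.verywellcovered-pseudomanifold} supplies that $\Ind(G)$ is a pseudomanifold, so \cref{t:sphere-ball} forces $\Ind(G)$ to be either a simplicial ball or a simplicial sphere, determined by whether a boundary is present.

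For the refined list of equivalences I would set up a short cycle. If $G = dK_2$, then $I(G) = (x_1 y_1, \dots, x_d y_d)$ is the polarization of the artinian ideal $(x_1^2, \dots, x_d^2)$, so \cref{thm.mainthm-new} applies with $A = \emptyset$ and identifies $\Ind(G)$ as a simplicial sphere. The implication simplicial sphere $\Rightarrow$ homology sphere is part of \eqref{implications}. Conversely, if $\Ind(G)$ is a homology sphere, then Reisner's criterion yields that $I(G)$ is Cohen-Macaulay, so by the first block of equivalences $\Ind(G)$ is a pseudomanifold; moreover, for each codimension-one face $R$ the link $\lk_{\Ind(G)}(R)$ is $0$-dimensional with $\tilde H_0 = k$, i.e.\ a pair of points, so $R$ lies in exactly two facets and $\Ind(G)$ has no boundary. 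Finally, a pseudomanifold $\Ind(G)$ with no boundary must come from $G = dK_2$ by the boundary clause of \cref{thm.verywellcovered-pseudomanifold}.

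The only delicate step is the passage from \emph{homology sphere} back to \emph{$G = dK_2$}: one needs to separate the algebraic Cohen-Macaulay content of the hypothesis (which triggers the first block of equivalences) from the sharper topological content about links of codimension-one faces, in order to land precisely in the ``no boundary'' case of \cref{thm.verywellcovered-pseudomanifold}. Every other step is a direct application of the theorems recalled earlier in this section.
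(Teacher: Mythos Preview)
Your argument is correct and follows essentially the same route as the paper's proof: the first block is assembled from \cref{thm.verywellcoveredclass}, \cref{thm.verywellcovered-pseudomanifold}, \eqref{implications}, and \cref{t:sphere-ball}, and the refined equivalences are closed using the boundary clause of \cref{thm.verywellcovered-pseudomanifold} together with \cref{t:sphere-ball}. The only cosmetic difference is that for $G = dK_2 \Rightarrow$ simplicial sphere you invoke \cref{thm.mainthm-new} directly, whereas the paper routes this through \cref{thm.verywellcovered-pseudomanifold} (no boundary) plus shellability plus \cref{t:sphere-ball}; and you spell out the step ``homology sphere $\Rightarrow$ pseudomanifold without boundary'' via links of ridges, which the paper leaves implicit.
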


\begin{proof}
    \cref{thm.verywellcoveredclass,thm.verywellcovered-pseudomanifold} show the
    equivalence of $(1), (2)$ and $(3)$.
    We always have $(4)$ implies $(3)$.
    It remains to show $(3)$ implies $(4)$.  If ${\rm Ind}(G)$ is a 
    pseudomanifold with boundary, we have $I(G)$ is Cohen-Macaulay.
    Since $I(G)$ is Cohen-Macaulay,
    we know that $\Ind(G)$ is shellable
    by \cref{thm.verywellcoveredclass}.
    So $\Ind(G)$ is a shellable pseudomanifold. The rest now follows
    from \cref{thm.verywellcovered-pseudomanifold}, and the the fact that a shellable pseudomanifold is a simplicial sphere if it has no boundary, and is a simplicial ball otherwise
    by \cref{t:sphere-ball}.
\end{proof}

It follows immediately that  if $G$ is a whiskered graph, then ${\rm Ind}(G)$ is a pseudomanifold, this was one of the main tools the authors used in~\cite[Theorem 3.1]{CFHNVT2023}.
We can apply \cref{cor.verywellcoveredclass} to deduce a further corollary.
Recall that a graph $G = (V,E)$ is a {\bf bipartite} graph 
if the vertices can be partitioned as $V = V_1 \cup V_2$ such that
every edge $e \in E$ satisfies $e \cap V_1 \neq \emptyset$ and
$e \cap V_2 \neq \emptyset$.  

\begin{corollary}[{\bf Cohen-Macaulay bipartite graphs}]\label{cor.bipartite+cm=>pseudomanifold}
  Let $G$ be a bipartite graph with no
  isolated vertices. 
  If $I(G)$ is Cohen-Macaulay,
  then $\Ind(G)$ is a pseudomanifold. If $G\neq dK_2$ then
  $\Ind(G)$ is a simplicial ball, and otherwise it is a simplicial sphere.
\end{corollary}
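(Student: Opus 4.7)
The plan is to reduce the statement to \cref{cor.verywellcoveredclass} by showing that every Cohen-Macaulay bipartite graph without isolated vertices is in fact very well-covered. Once this reduction is in place, the pseudomanifold conclusion and the ball-versus-sphere dichotomy are immediate from that corollary.

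First I would set up notation: let $G=(V,E)$ be bipartite with bipartition $V=V_1\cup V_2$ and assume $G$ has no isolated vertices and $I(G)$ is Cohen-Macaulay. Because $I(G)$ is Cohen-Macaulay, $\Ind(G)$ is pure (this is built into the implications chain \eqref{implications}, since Cohen-Macaulayness forces pureness), so $G$ is well-covered, meaning all maximal independent sets have the same cardinality.

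Next I would observe that each $V_i$ is itself a maximal independent set. Bipartiteness makes each $V_i$ independent, and the no-isolated-vertex hypothesis means every vertex in $V_{3-i}$ has at least one neighbor, which by bipartiteness lies in $V_i$. Hence no vertex of $V_{3-i}$ can be added to $V_i$ while keeping it independent, so $V_i$ is maximal. Combined with well-coveredness, this forces $|V_1|=|V_2|=:d$, and every maximal independent set of $G$ has size $d=|V|/2$. By definition, $G$ is very well-covered on $2d$ vertices.

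With the bipartite Cohen-Macaulay hypothesis translated into the very well-covered setting, I would invoke \cref{cor.verywellcoveredclass}: since $I(G)$ is Cohen-Macaulay, $\Ind(G)$ is a simplicial ball or sphere (in particular, a pseudomanifold), and the refined statements in that corollary give that $\Ind(G)$ is a sphere exactly when $G=dK_2$ and a ball otherwise. There is essentially no obstacle here once the reduction is made; the only point that requires a small argument is the maximality of each $V_i$ as an independent set, and this is exactly where the no-isolated-vertex assumption is used.
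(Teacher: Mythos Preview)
Your proposal is correct and follows essentially the same route as the paper: show that a Cohen-Macaulay bipartite graph without isolated vertices is very well-covered (using purity and the fact that each $V_i$ is a maximal independent set), then invoke \cref{cor.verywellcoveredclass}. If anything, you spell out the maximality of each $V_i$ more carefully than the paper does.
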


\begin{proof}
  Let $V = V_1 \cup V_2$ be the bipartition of $V$. Since $I(G)$ is 
  Cohen-Macaulay, $\Ind(G)$ is
  pure, and since $V_1$ and $V_2$ are maximal independent sets, we must have $|V_1|=|V_2| = |V|/2$.  So, $G$ must be very well-covered,
  since every maximal independent set must also have cardinality $|V|/2$.
  Now apply \cref{cor.verywellcoveredclass}.
\end{proof}

\section{Applications to Gorenstein rings}

    In~\cite[Theorem II.5.1]{Stanley1996} Stanley shows that if 
    $R/\mathcal{N}(\Delta)$ is a Gorenstein ring, where $R = k[x_1,\dots, x_n]$ and $\mathcal{N}(\Delta)$ is the Stanley-Reisner ideal of $\Delta$, then the simplicial complex $\Delta$ is either a $k$-homology sphere, or a cone over some $k$-homology sphere. In particular, if every variable of $R$ divides a generator of $\mathcal{N}(\Delta)$, then $R/\mathcal{N}(\Delta)$ is Gorenstein if and only if $\Delta$ is a $k$-homology sphere, and in particular, $\Delta$ has non-trivial homology. 

    Throughout this paper, we have shown that two classes of complexes, namely the independence complexes of grafted complexes and very well-covered graphs are always balls, except in the cases where their respective Stanley-Reisner ideals are complete intersections. In a different language, our result implies the following:

    \begin{proposition}\label{p:grafted-very-well-covered}
    Let $I$ be a square-free monomial ideal such that either 
    \begin{enumerate}
        \item the facet complex of $I$ is grafted, or
        \item the facet complex of $I$ is a very-well covered graph.
    \end{enumerate}   
    Then $I$ is Gorenstein if and only if it is a complete intersection.
    \end{proposition}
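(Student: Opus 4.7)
The plan is to combine Stanley's classification of Gorenstein Stanley-Reisner rings (recalled at the start of this section) with the ball-or-sphere dichotomies already supplied by~\cref{thm.mainthm-new} and~\cref{cor.verywellcoveredclass}. The backward direction is immediate since every complete intersection is Gorenstein, so the content lies entirely in the forward direction.

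Assume $I$ is Gorenstein. In both hypotheses, the vertex set of $\F(I)$ is exactly the set of variables occurring among the generators of $I$, so every variable of the ambient polynomial ring $R$ divides some generator of $I = \N(\Ind(\F(I)))$. By the consequence of Stanley's Theorem~II.5.1 recalled at the beginning of this section, this forces $\Delta := \N(I) = \Ind(\F(I))$ to be a $k$-homology sphere; in particular $\Delta$ has nontrivial top reduced homology.

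In case~(1),~\cref{thm.mainthm-new} describes $\Delta$ as a generalized Bier ball and identifies the sphere case with $A = \emptyset$, i.e., the artinian ideal being polarized is the pure-power ideal $(x_1^{a_1},\ldots,x_n^{a_n})$. Since a simplicial ball is contractible and hence acyclic, the homology-sphere conclusion rules out the ball case, so $A = \emptyset$ and
\[
I \;=\; \P\bigl((x_1^{a_1},\ldots,x_n^{a_n})\bigr) \;=\; \bigl(x_{1,1}\cdots x_{1,a_1},\;\ldots,\; x_{n,1}\cdots x_{n,a_n}\bigr),
\]
whose generators have pairwise disjoint supports and therefore form a regular sequence. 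Hence $I$ is a complete intersection. Case~(2) runs in parallel using~\cref{cor.verywellcoveredclass}: $\Ind(G)$ is a simplicial ball or sphere, and a sphere if and only if $G = dK_2$, in which case $I(G) = (x_1y_1,\ldots,x_dy_d)$ is manifestly a complete intersection.

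I do not anticipate a real obstacle: the combinatorial heavy lifting has already been carried out in earlier sections, and the remaining work is simply to verify the no-cone-vertex hypothesis required to apply Stanley's criterion in its clean ``homology sphere'' form. This is immediate in case~(1) from the polarization formula (every $x_{i,j}$ divides $\P(x_i^{a_i})$) and in case~(2) from the no-isolated-vertex assumption built into the very well-covered setting, so at no point do we need to pass to the core of $\Delta$.
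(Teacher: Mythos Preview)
Your proof is correct and follows essentially the same route as the paper: verify that $\Delta$ has no cone vertex so that Stanley's criterion yields a homology sphere, then use the ball/sphere dichotomies of \cref{thm.mainthm-new} and \cref{cor.verywellcoveredclass} (the paper cites \cref{thm.verywellcovered-pseudomanifold} directly) together with acyclicity of balls to force the sphere case, which is exactly the complete-intersection case. One small omission worth making explicit in case~(2): you should note that Gorenstein implies Cohen--Macaulay before invoking \cref{cor.verywellcoveredclass}, since that corollary only delivers the ball-or-sphere conclusion once one of its equivalent hypotheses is in hand; also, the absence of isolated vertices follows from $G=\F(I)$ (every vertex lies in some facet) rather than from the definition of ``very well-covered'' per se.
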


    \begin{proof}
        Let $\Delta$ be the Stanley-Reisner complex of $I$ and assume $\Delta$ is either the independence complex of a grafted complex or the independence complex of a very well-covered graph. By~\cref{thm.mainthm-new,thm.verywellcovered-label} we know $\Delta$ is not a cone over a face, and in particular by~\cite[Theorem 5.1 Part 2]{Stanley1996}, if $I$ is Gorenstein, $\Delta$  must have non-trivial homology. The result then follows since by~\cref{thm.mainthm-new,thm.verywellcovered-pseudomanifold} $\Delta$ has non-trivial homology exactly when the corresponding Stanley-Reisner ideal is a complete intersection.
    \end{proof}

    \begin{remark} 
        Since polarization preserves the Gorenstein property, and the polarization of every artinian monomial ideal is grafted~(\cref{thm.mainthm-new}), ~\cref{p:grafted-very-well-covered} (1) can be seen as a different proof for the fact that the only Gorenstein artinian monomial ideals are the monomial artinian complete intersections.
    \end{remark}

    In the case of edge ideals, known results from graph theory on well-covered graphs~\cites{P1995,FHN1993} can be used to directly obtain more information on the classification of Gorenstein edge ideals. For the sake of completeness, below we include a proof of a slightly weaker version of~\cite[Remark 3.5]{HT2016} to showcase how the techniques from this paper can be applied to the problem of classifying Gorenstein square-free monomial ideals.
    
\begin{proposition}[{\bf see~{\cite[Remark 3.5]{HT2016}}}]\label{thm.gorensteinmainthm} 
    Suppose that $G$ is a graph such that $I(G)$ is
    Gorenstein.  Then the connected components
    of $G$ are either $K_1$, $K_2$, or graphs
    with girth $\leq 5$.
\end{proposition}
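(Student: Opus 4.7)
The strategy is to reduce to the connected case and then rule out girth at least $6$ using the classification of well-covered graphs with large girth together with~\cref{p:grafted-very-well-covered}.

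Suppose $G = G_1 \sqcup G_2$ with the $G_i$ on disjoint vertex sets. Then $\mathrm{Ind}(G) = \mathrm{Ind}(G_1) \ast \mathrm{Ind}(G_2)$, so the quotient ring splits as a tensor product $R/I(G) \cong (R_1/I(G_1)) \otimes_k (R_2/I(G_2))$. A tensor product of finitely generated $k$-algebras is Gorenstein if and only if both factors are, so it suffices to prove the statement for a single connected component. If that component is $K_1$ or $K_2$ there is nothing to show, so I assume $G$ is connected with $|V(G)| \geq 3$, $I(G)$ is Gorenstein, and, for contradiction, that the girth of $G$ is at least $6$. In particular $G$ is well-covered, since Gorenstein implies Cohen-Macaulay implies unmixed.

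Now apply the Finbow-Hartnell-Nowakowski classification~\cite{FHN1993} (see also the survey~\cite{P1995}): a connected well-covered graph of girth $\geq 6$ with no isolated vertex is either $C_7$ or has a perfect matching consisting of pendant edges. In the latter case, the argument of~\cref{ex.whisker-verywellcovered} shows that $G$ is very well-covered, and then~\cref{p:grafted-very-well-covered}(2) forces $I(G)$ to be a complete intersection. For an edge ideal, being a complete intersection is equivalent to the edges being pairwise vertex-disjoint, i.e., $G = dK_2$, contradicting connectivity together with $|V(G)| \geq 3$.

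It remains to rule out $G = C_7$. A direct count gives $f_0 = 7$, $f_1 = 14$, $f_2 = 7$ for $\mathrm{Ind}(C_7)$, so its reduced Euler characteristic is $-1 + 7 - 14 + 7 = -1$. If $\mathrm{Ind}(C_7)$ were Cohen-Macaulay, Reisner's criterion applied at the empty face would force $\tilde{H}_0 = \tilde{H}_1 = 0$ and hence $\tilde{\chi} = \dim_k \tilde{H}_2 \geq 0$, a contradiction. Therefore $I(C_7)$ is not Cohen-Macaulay, let alone Gorenstein, which finishes the contradiction. The main obstacle is locating and applying the correct graph-theoretic classification in the third paragraph; once that is in hand, the rest reduces to~\cref{p:grafted-very-well-covered} together with a routine $f$-vector computation for $C_7$.
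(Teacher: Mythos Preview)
Your proof is correct and follows essentially the same route as the paper: reduce to the connected case, invoke the Finbow--Hartnell--Nowakowski classification of well-covered graphs with girth at least six, rule out $C_7$ by showing it is not Cohen--Macaulay, and rule out the whiskered case via the ball/sphere dichotomy (you invoke \cref{p:grafted-very-well-covered}, the paper invokes \cref{cor.verywellcoveredclass} together with Stanley's criterion directly, but these amount to the same argument). Your explicit Euler-characteristic computation for $\Ind(C_7)$ and your tensor-product justification for the reduction to connected components are a bit more detailed than the paper's citations, but nothing substantive differs.
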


\begin{proof}
    By \cite[Lemma 2.2]{Trung2018}, $G$ is a 
    Gorenstein graph if and only if each connected
    component of $G$ is Gorenstein.  A direct 
    computation shows that $K_1$ and $K_2$
    are Gorenstein.  So, it is suffices
    to show that if $G$ is a connected Gorenstein graph
    with $G \neq K_1$  and $G \neq K_2$, its girth
    is at most five.  
    
    Since $I(G)$ is Gorenstein, $I(G)$
    is Cohen-Macaulay, and 
    thus $G$ is well-covered.
    Suppose $G$ has girth six or greater.
    By~\cite[Corollary 5]{FHN1993}, 
    either $G = C_7$, or $G$ is whiskered.  Since $I(C_7)$ is not Cohen-Macaulay, $C_7$ is 
    also not Gorenstein.  Hence,  
    we only have to show $G$ is not whiskered.
    
    To see that $G$ cannot be a whiskered graph, note that by \cite[Theorem II.5.1]{Stanley1996} the independence complexes of Gorenstein graphs without isolated vertices correspond to flag homology spheres. In particular, the independence complex of $G$ has non-trivial homology. However, the independence complex of a whiskered graph that is not $dK_2$ is a simplicial ball  by~\cref{cor.verywellcoveredclass} and hence has trivial homology. So we conclude $G$ cannot be a whiskered graph.  So the girth of $G$ is
    at most five.
\end{proof}

\subsection*{Acknowledgments}
Part of the work on this project
took place at the Fields Institute in 
Toronto, Canada where some of the authors
participated in the ``Thematic Program in
Commutative Algebra and Applications''.  
Faridi, Holleben, Nicklasson, and Van Tuyl
thank Fields for the financial support they
received.
Cooper's research is supported by NSERC Discovery Grant 
2024-05444.  
Faridi's research is supported by
NSERC Discovery Grant 2023-05929.
Van Tuyl’s research is supported by NSERC Discovery Grant 2024-05299.

\bibliographystyle{plain}
\bibliography{bibliography.bib}

\end{document}